\theoremstyle{definition}
\newtheorem{theorem}{Theorem}
\newtheorem{proposition}[theorem]{Proposition}
\newtheorem{lemma}[theorem]{Lemma}
\newtheorem{claim}[theorem]{Claim}
\newtheorem{defn}[theorem]{Definition}
\newtheorem{cor}[theorem]{Corollary}
\newtheorem{eg}[theorem]{Example}
\newtheorem{rmk}[theorem]{Remark}
\newtheorem{conj}{Conjecture}
\newtheorem{question}{Question}
\newtheorem{convention}[theorem]{Convention}
\title{On some $\Sigma^{B}_{0}$-formulae generalizing counting principles over $V^{0}$}
\author{Eitetsu Ken\footnote{email: yeongcheol-kwon@g.ecc.u-tokyo.ac.jp}}
\newcommand{\QQ}{\mathbb{Q}}
\newcommand{\NN}{\mathbb{N}}
\newcommand{\ZZ}{\mathbb{Z}}
\newcommand{\FF}{\mathbb{F}}
\newcommand{\KK}{\mathbb{K}}
\DeclareMathOperator{\dom}{dom}
\DeclareMathOperator{\ran}{ran}
\DeclareMathOperator{\LA}{\bold{LA}}
\DeclareMathOperator{\LAP}{\bold{LA}\mathtt{P}}
\DeclareMathOperator{\VNC}{\bold{VNC}}
\DeclareMathOperator{\VTC}{\bold{VTC}}
\begin{document}

\maketitle

\begin{abstract}
We formalize various counting principles and compare their strengths over $V^{0}$.
In particular, we conjecture the following mutual independence between:
\begin{itemize}
\item a uniform version of modular counting principles and the pigeonhole principle for injections,
\item a version of the oddtown theorem and modular counting principles of modulus $p$, where $p$ is any natural number which is not a power of $2$,
\item and a version of Fisher's inequality and modular counting principles.
\end{itemize}
Then, we give sufficient conditions to prove them.
We give a variation of the notion of $PHP$-tree and $k$-evaluation to show that any Frege proof of the pigeonhole principle for injections admitting the uniform counting principle as an axiom scheme cannot have $o(n)$-evaluations.
As for the remaining two, we utilize well-known notions of $p$-tree and $k$-evaluation and reduce the problems to the existence of certain families of polynomials witnessing violations of the corresponding combinatorial principles with low-degree Nullstellensatz proofs from the violation of the modular counting principle in concern.
\end{abstract}

\section{Introduction}
 \quad Ajtai's discovery (\cite{Ajtai}) of $V^{0}\not \vdash ontoPHP^{n+1}_{n}$, where $ontoPHP^{n+1}_{n}$ is a formalization of the statement ``there does not exist a bijection between $(n+1)$ pigeons and $n$ holes,'' was a significant breakthrough in proof complexity.
 The technique, which was later formalized in \cite{remake} as $k$-evaluation and switching lemma, has been utilized to further works in the area such as the comparison between various types of counting principles (such as \cite{countvsphp} and \cite{More}).
 In the course of the works, it turned out that degree lower bounds of Nullstellensatz proofs are essential when one would like to give lower bounds for the lengths of proofs from constant depth Frege system equipped with $\{Count^{p}_{k}\}_{1 \leq k \in \NN}$ (i.e., the modular counting principle mod $p$) as an axiom scheme.
 One of the most important open problems in the current proof complexity is whether $V^{0}(p) \vdash injPHP^{n+1}_{n}$ or not (here, $injPHP^{n+1}_{n}$ denotes the pigeonhole principle for injections). 
 This problem is interesting because it would deepen our understanding of how hard it is to count a set (recall that $\VTC^{0} \vdash injPHP^{n+1}_{n}$ and $V^{0}(p) \vdash Count^{p}_{n}$). 
 Furthermore, if the problem is solved for composite $p$, it would give us tips to solve the separation problem $AC^{0}(p)$ versus $TC^{0}$.
 As for this problem, \cite{Razborov} made considerable progress. 
 The paper gave good degree lower bounds for polynomial calculus proofs of $injPHP^{m}_{n}$ ($m>n$), which does not depend on the specific coefficient field.
 
This paper aims to connect the result of \cite{Razborov} to a superpolynomial lower bound for the proof length of $injPHP^{n+1}_{n}$ from $AC^{0}$-Frege system equipped with \textit{Uniform Counting Principle}, which can be seen as a uniform version of infinite formulae $\{Count^{p}_{n}\}_{p,n}$, as an axiom scheme. 
 Tackling the issue, we obtain natural and exciting open problems.
 We also consider some of them, too.
 
  The detailed content and the organization of the article are as follows.
  
 First, in \S \ref{Preliminaries}, as the preliminary, we define three types of (first-order and propositional) formulae, which at first glance seem to be generalized versions of modular counting principles (which do not fix the modulus). We name them as:
 \begin{enumerate}
  \item \textit{Modular Pigeonhole Principle} $modPHP^{d,m}_{n}$.
  \item \textit{Uniform Counting Principle} $UCP^{l,m}_{n}$.
  \item \textit{Generalized Counting Principle} $GCP$.
 \end{enumerate}
 Then, we compare the relative strength of these versions over $V^{0}$ and also develop some independence results over $V^{0}$.
 It immediately turns out that 
 \begin{itemize}
 \item $V^{0} + ontoPHP^{L}_{l} \vdash modPHP^{q,m}_{k}$, and
 \item $V^{0} + modPHP^{q,m}_{k} \vdash ontoPHP^{L}_{l}$.
 \end{itemize}
 (For the precise meaning of the statements, see \S \ref{Preliminaries})
 
 Therefore, we see $modPHP^{d,m}_{n}$ is actually not appropriate to be called a generalized version of modular counting principles because it cannot imply them.
 
 On the other hand, we observe that 
 \begin{itemize}
  \item For any natural number $p \geq 2$, $V^{0}+UCP^{l,m}_{k} \vdash Count^{p}_{n}$.
  \item $V^{0}+UCP^{l,m}_{k} \vdash ontoPHP^{n+1}_{n}$.
  \item $V^{0}+GCP \vdash UCP^{l,m}_{n}$.
  \item $V^{0}+GCP \vdash injPHP^{n+1}_{n}$.
 \end{itemize}
 Hence, we see $UCP^{l,m}_{n}$ and $GCP$ can be seen as generalizations of counting principles. 
 In the latter sections, we tackle natural questions arising from the observations above.
 Towards them, in \S \ref{Preliminaries}, we review the Nullstellensatz proof system.
 
 In \S \ref{On Question UCPinjPHP}, we consider the problem: $V^{0}+UCP^{l,m}_{k} \vdash injPHP^{n+1}_{n}$? 
 The author conjectures $V^{0}+UCP^{l,m}_{k} \not \vdash injPHP^{n+1}_{n}$, and gives a sufficient condition to prove it. 
 We define suitable analogues of \textit{PHP-tree} and \textit{$k$-evaluation} in the proof of Ajtai's theorem given in \cite{proofcomplexity}, and show that if \textit{an $h$-evaluation using $injPHP$-trees} exists for a Frege proof of $injPHP^{n+1}_{n}$ admitting $UCP^{l,m}_{k}$ as an axiom scheme, $h$ cannot be of order $o(n)$. 
 Our main work is manipulating the trees that connect the order of $h$ with the degrees of Nullstellensatz proofs of it. 
 
In \S \ref{On the strength of the oddtown theorem}, we consider a ``more natural'' combinatorial principle than $GCP$ that implies both $injPHP^{n+1}_{n}$ and $Count^{p}_{n}$ for some $p$. 
 Namely, we consider the (propositional and first-order) formulae $oddtown_{n}$, which formalize the the oddtown theorem. We observe:
 \begin{itemize}
  \item $V^{0}+oddtown_{k} \vdash Count^{p}_{n}$ for $p=2^{l}$.
  \item $V^{0}+oddtown_{k} \vdash injPHP^{n+1}_{n}$.
 \end{itemize}
 The author conjectures $V^{0}+oddtown_{k} \not\vdash Count^{p}_{n}$ for any prime $p \neq 2$, and gives a sufficient condition to prove it.
 Roughly speaking, the statement is as follows; if $V^{0}+oddtown_{k} \vdash Count^{p}_{n}$, then there exists a constant $\epsilon > 0$ such that for each $n$, we can construct a vector of $n^{O(1)}$ many $\FF_{2}$-polynomials whose violating oddtown condition can be verified by Nullstellensatz proofs from $\lnot Count^{p}_{n^{\epsilon}}$ with degree $O(1)$.
 
 In \S \ref{On the strength of Fisher's inequality}, we consider the (propositional and first-order) formulae $FIE_{n}$ which formalize Fisher's inequality.
 We observe
 \begin{itemize}
  \item $V^{0}+FIE_{k} \vdash injPHP^{n+1}_{n}$.
 \end{itemize}
 On the other hand, the author conjectures $V^{0}+FIE_{k} \not \vdash Count^{p}_{n}$ for any $p \geq 2$, and gives a sufficient condition whose form is similar to the previous one to prove it.

\section{Preliminaries}\label{Preliminaries}
\subsection{Setup of bounded arithmetic and counting principles}
\quad Throughout this paper, $p$ and $q$ denote natural numbers. 
The cardinality of a finite set $S$ is denoted by $\# S$.
We prioritize readability and often use natural abbreviations to express logical formulae. We assume the reader is familiar with the basics of bounded arithmetics and Frege systems (such as the concepts treated in \cite{Cook}).
Unless stated otherwise, we follow the conventions of \cite{Cook}.

We basically use the parenthesis $(,)$ to denote tuples, but when there are several tuples of different types or in different universes, we also use $\langle , \rangle$ for readability.
In particular, in \S 5, we often label edges and leaves of a tree by tuples. 
In that case, we use $\langle, \rangle$.

As propositional connectives, we use only $\bigvee$ and $\lnot$. We assume $\bigvee$ has unbounded arity. 
When the arity is small, we also use $\lor$ to denote $\bigvee$. 
We define an abbreviation $\bigwedge$ by
\begin{align*}
\bigwedge_{i=1}^{k} \varphi_{i}:= \lnot \bigvee_{i=1}^{k} \lnot \varphi_{i}.
\end{align*}
 When the arity of $\bigwedge$ is small, we also use $\land$ to denote it.
 We give the operators $\bigvee$ and $\bigwedge$ precedence over $\lor$ and $\land$ as the order of application.
 \begin{eg}
 $\bigwedge_{i} \varphi_{i} \lor \bigwedge_{j} \psi_{j}$ means $(\bigwedge_{i} \varphi_{i}) \lor (\bigwedge_{j} \psi_{j})$.
 \end{eg}
 We also define an abbreviation $\rightarrow$ by 
 \begin{align*}
 (\varphi \rightarrow \psi) := \lnot \varphi \lor \psi.
 \end{align*}
 For a set $S$ of propositional variables, an \textit{$S$-formula} means a propositional formula whose propositional variables are among $S$.
 For a set $S=\{s^{j}_{i} \mid 1 \leq j \leq k, i \in I_{j}\}$ of propositional variables where each $s^{j}_{i}$ is distinct, 
 an $S$-formula $\psi$, and a family $\{\varphi^{j}_{i}\}_{i \in I_{j}}$ ($j = 1, \ldots, k$) of propositional formulae,
 \begin{align*}
  \psi[\varphi^{1}_{i}/s^{1}_{i}, \cdots, \varphi^{k}_{i} / s^{k}_{i}]
 \end{align*}
   denotes the formula obtained by substituting each $\varphi^{j}_{i}$ for $s^{j}_{i}$ simultaneously.
   
It is well-known that a $\Sigma^{B}_{0}$ $\mathcal{L}^{2}_{A}$-formula $\varphi(x_{1}, \ldots,x_{k},R_{1}, \ldots, R_{l})$ can be translated into a family $\{\varphi[n_{1},\ldots, n_{k}, m_{1}, \ldots, m_{l}]\}_{n_{1},\ldots, n_{k}, m_{1}, \ldots, m_{l} \in \NN}$ of propositional formulae. (See Theorem VII 2.3 in \cite{Cook}.)

Now, we define several formulae which express the so-called ``counting principle.''
\begin{defn}
 For each $p \geq 2$, let $Count^{p}(n,X)$ be an $\mathcal{L}^{2}_{A}$-formula as follows (intuitively, it says for $n \not \equiv 0 \pmod p$, $[n]$ cannot be $p$-partitioned):
 \begin{align*}
  Count^{p}(n,X) := \lnot p \mid n \rightarrow \lnot\Bigg(
  &\Big(\forall k \in [n].\exists e \in [n]^{(p)}. (e \in X \land k \in^{*} e)\Big) \\
  &\land \Big(\forall e, e^{\prime} \in [n]^{(p)}. \lnot (e\in X \land e^{\prime} \in X\land e \perp e^{\prime})\Big)\Bigg)
 \end{align*}
 Here, 
 \begin{itemize}
  \item $p \mid n$ is a $\Sigma^{B}_{0}$ formula expressing ``$p$ divides $n$.''
  %\item Recall that $|{\cdot}|$ is a unary function symbol in $\mathcal{L}^{2}_{A}$, and $|{X}|$ is meant to be $\min \{y \mid \forall x \in X.\ x < y\}$. Note that it is $0$ if and only if $X$ is empty. 
  %In above formalization, we artificially set $|X|=(n+1)^{p}+1$, that is, $\max X = (n+1)^{p}$, to make the formal treatment of propositional translation easier.
  \item $[n]$ denotes the set $\{1, \ldots,n\}$.
  \item We code a $p$-subset 
 \[e=\{e_{1}< \cdots< e_{p}\}\]
  of $[n]$ by the number 
 \[\sum_{i=1}^{p}e_{i}(n+1)^{p-i}.\]
 \item $[n]^{(p)}$ denotes the $\Sigma^{B}_{0}$-definable set of all the codes of the $p$-subsets of $[n]$.
 Note that each member in $[n]^{(p)}$ is less than, say, $(n+1)^{p}$.

 %\item $Code(e,n)$ is a natural $\Sigma^{B}_{0}$-predicate saying ``$e$ is a code of $p$-subset of $n$.'' 
 \item The elementship relation $\in^{*}$ is expressed by a natural $\Sigma^{B}_{0}$-predicate.
 We often write it $\in$, too. 
 \item $e \perp e^{\prime}$ means 
 \[e \neq e^{\prime} \ \mbox{and} \ e \cap e^{\prime} \neq \emptyset,\] 
 and it is  also expressed by a natural $\Sigma^{B}_{0}$-predicate.
 \end{itemize}
 
 \quad We also define the propositional formula $Count^{p}_{n}$ as in \cite{Krajicek}:
 \begin{align*}
  Count^{p}_{n} := 
  \begin{cases}
  1  &(\mbox{if $p \mid n$})\\
  \lnot \left(\bigwedge_{k \in [n]} \bigvee_{e: k \in e \in [n]^{(p)} } r_{e} \land
  \bigwedge_{e,e^{\prime} \in [n]^{(p)} :e \perp e^{\prime}} (\lnot r_{e} \lor \lnot r_{e^{\prime}})\right)  &(\mbox{otherwise}) 
    \end{cases}
 \end{align*}
 Here, $\{r_{e}\}_{e \in [n]^{(p)}}$ is a family of distinct propositional variables.
 \end{defn}

\begin{convention}\label{excuse}
 With suitable identification of propositional variables, $Count^{p}(x,X)[n,(n+1)^{p}]$ is equivalent to $Count^{p}_{n}$ over $AC^{0}$-Frege system modulo polynomial-sized proofs. Thus we often abuse the notation and write $Count^{p}_{n}$ for $Count^{p}(n,X)$.
\end{convention}

\begin{defn}
 The $\Sigma^{B}_{0}$ $\mathcal{L}^{2}_{A}$-formula $ontoPHP(m,n,R)$ is a natural expression of the statement ``If $m>n$, then $R$ does not give a graph of a bijection between $[m]$ and $[n]$,'' in a similar way as $Count^{p}(n,X)$. 
 Similarly, the $\Sigma^{B}_{0}$ $\mathcal{L}^{2}_{A}$-formula $injPHP(m,n,R)$ is a natural expression of the statement ``If $m>n$, then $R$ does not give a graph of an injection from $[m]$ to $[n]$.''\\
 \quad We also define the propositional formulae $ontoPHP^{m}_{n}$ and $injPHP^{m}_{n}$ by
 \begin{align*}
  ontoPHP^{m}_{n}:=
  \begin{cases}
  \lnot\Big(\bigwedge_{i \in [m]} \bigvee_{j \in [n]} r_{ij} \land \bigwedge_{i \neq i^{\prime} \in [m]} \bigwedge_{j \in [n]} (\lnot r_{ij} \lor \lnot r_{i^{\prime}j}) \\
  \quad \land \bigwedge_{j \in [n]} \bigvee_{i \in [m]}r_{ij} 
  \land \bigwedge_{j \neq j^{\prime} \in [n]} \bigwedge_{i \in [m]} (\lnot r_{ij} \lor \lnot r_{ij^{\prime}}) \Big) &(\mbox{if $m>n$})\\
  1  &(\mbox{otherwise})
  \end{cases}
 \end{align*}
 and  
  \begin{align*}
  injPHP^{m}_{n}:=
  \begin{cases}
  \lnot\Big(\bigwedge_{i \in [m]} \bigvee_{j \in [n]} r_{ij} \land \bigwedge_{i \neq i^{\prime} \in [m]} \bigwedge_{j \in [n]} (\lnot r_{ij} \lor \lnot r_{i^{\prime}j}) \\
  \quad \land \bigwedge_{j \neq j^{\prime} \in [n]} \bigwedge_{i \in [m]} (\lnot r_{ij} \lor \lnot r_{ij^{\prime}}) \Big)  &(\mbox{if $m>n$})\\
  1   &(\mbox{otherwise})
  \end{cases}
 \end{align*}
 With reasons similar to the one stated in Convention \ref{excuse}, we abuse the notations and use $ontoPHP^{m}_{n}$ to denote $ontoPHP(m,n,R)$ and $injPHP^{m}_{n}$ to denote $injPHP(m,n,R)$.
\end{defn}

The following are well-known:
\begin{theorem}[\cite{Ajtai}, improved by \cite{remake} and \cite{remake2}]\label{Ajtai}
\begin{align*}
 V^{0} \not \vdash ontoPHP^{n+1}_{n}.
\end{align*}
Here, we adopt the following convention.
\end{theorem}
\begin{convention}
For $\Sigma^{B}_{0}$-formulae $\psi_{1}, \ldots, \psi_{l}$ and $\varphi$, we write
\begin{align*}
V^{0}+\psi_{1}+\cdots +\psi_{l} \vdash \varphi
\end{align*}
to express the fact that the theory $V^{0}\cup\{\forall\forall \psi_{i}\mid i \in [l]\}$ implies $\forall\forall \varphi$. 
Here, $\forall\forall$ means the universal closure.\\
\quad We use different parameters to express concrete $\vec{\psi}$ and $\varphi$ in order to avoid the confusion.
We also use letters $p$ and $q$ for fixed parameters of formulae (which are not universally quantified in the theory).
For example, 
\begin{align*}
V^{0}+ Count^{p}_{k}\not \vdash Count^{q}_{n}
\end{align*} 
means
 \begin{align*}
 V^{0}+\forall k,X. Count^{p}(k,X) \not \vdash \forall n,X.\ Count^{q}(n,X),
 \end{align*}
  while
\[V^{0} \not \vdash UCP^{l,d}_{n}\]
 means
\[V^{0} \not \vdash \forall l,d,n,R.\ UCP(l,d,n,R)\]
 (for the definition of $UCP^{l,d}_{n}$ and $UCP(l,d,n,R)$, see Definition \ref{UCP}).
 
In the former example, note that we have used the different variables $k,n$ to avoid confusion about the variables' dependencies.
\end{convention}

\begin{theorem}[\cite{countvsphp}]\label{charcount}
 For $p,q \geq 2$,
 $V^{0}+Count^{p}_{k} \vdash Count^{q}_{n}$ if and only if $\exists N \in \NN.\ q \mid p^{N}$.
\end{theorem}

\begin{theorem}[\cite{More}]\label{CountpinjPHP}
For any $p \geq 2$,
$V^{0}+ Count^{p}_{k} \not \vdash injPHP^{n+1}_{n}$.
\end{theorem}
Also, the following is a corollary of the arguments given in \cite{Krajicek}:
\begin{theorem}[essentially in \cite{Krajicek}]\label{injPHPCountp}
For all $p \geq 2$,
 $V^{0}+ injPHP^{k+1}_{k} \not \vdash Count^{p}_{n}$.
\end{theorem}
\begin{rmk}
Note that the exact statement Theorem 12.5.7 in \cite{Krajicek} shows is 
\begin{align*}
 V^{0} + ontoPHP^{k+1}_{k} \not \vdash Count^{p}_{n}
\end{align*}
for each fixed $p \geq 2$.
However, with a slight change of the argument, it is easy to see that Theorem \ref{injPHPCountp} actually holds.
\end{rmk}

From now on, we consider several seemingly generalized versions of $Count^{p}_{n}$, which do not fix the modulus $p$, and evaluate their strengths.\\
\quad Naively, the generalized counting principle should be a statement like: ``For any $d \geq 2$ and $n \in \NN$, if $d$ does not divide $n$, then $n$ cannot be partitioned into $d$-sets.''
The following is one of the straightforward formalizations of this statement:
\begin{defn}
The $\Sigma^{B}_{0}$ $\mathcal{L}^{2}_{A}$-formula $modPHP(d,m,n,R)$ is a natural formalization of the statement
``If $m \not \equiv n \pmod d$, then $R$ does not give the graph of a bijection between $[m]$ and $[n]$.''\\
\quad We also define the propositional formulae $modPHP^{d,m}_{n}$ as follows:
\begin{align*}
 &modPHP^{d,m}_{n} := \\
 &\begin{cases}
 \mbox{same as the case of $m>n$ in the definition of $ontoPHP^{m}_{n}$} &(\mbox{if $m \not \equiv n \pmod d$})\\
 1 & (\mbox{otherwise})
 \end{cases}
\end{align*}

With a similar reason as the one given in Convention \ref{excuse}, we abuse the notation and use $modPHP^{d,m}_{n}$ to denote $modPHP(d,m,n,R)$.
\end{defn}

Intuitively, $modPHP^{d,m}_{n}$ expresses ``if $n \not \equiv m \pmod d$ and $m=ds+r$ ($0 \leq r <d$), 
then there does not exist a family $\{S_{i}\}_{i \in [q]}$ of $d$-sets and an $r$-set $S_{0}$ which give a partition of $[n]$.''
However, it does not imply even $Count^{2}_{n}$:
\begin{proposition}
The following hold:
\begin{enumerate}
 \item\label{ontoPHPmodPHP} $V^{0} + ontoPHP^{L}_{l} \vdash modPHP^{d,m}_{k}$.
 \item\label{modPHPontoPHP} $V^{0} + modPHP^{d,m}_{k} \vdash ontoPHP^{L}_{l}$.
\end{enumerate}
In particular, for any $p \geq 2$, $V^{0}+ modPHP^{d,m}_{k} \not \vdash Count^{p}_{n}$.
\end{proposition}

\begin{proof}
As for \ref{ontoPHPmodPHP}, argue in $V^{0}$ as follows: assume $m \not \equiv k \pmod d$, and $R$ gives a bijection between $[m]$ and $[k]$. It easily follows that $m \neq k$, and hence $R$ or $R^{-1}$ violates $ontoPHP^{m}_{k}$ or $ontoPHP^{k}_{m}$.

As for \ref{modPHPontoPHP}, argue in $V^{0}$ as follows: suppose $L>l$ and $R$ gives a bijection between $[L]$ and $[l]$. 
Then $R$ violates $modPHP^{L,L}_{l}$.

The last part follows from Theorem \ref{injPHPCountp}.
\end{proof}

Therefore, $modPHP^{d,m}_{n}$ is actually not a generalization of counting principles over $V^{0}$.

 Next, we consider the following version:
\begin{defn}\label{UCP}
$UCP(l,d,n,R)$ (which stands for \textit{Uniform Counting Principle}) is an $\mathcal{L}^{2}_{A}$ formula defined as follows:
\begin{align*}
(d \geq 1 \land \lnot d \mid n ) \rightarrow \lnot
&\forall i \in [l]. (\forall j\in [d]. \exists e \in [n]. R(i,j,e)  \lor \forall j \in [d]. \lnot \exists e \in [n]. R( i,j,e )) \\
&\land \forall (i,j) \in [l]\times[d]. \forall e \neq e^{\prime} \in [n] (\lnot R( i,j,e) \lor \lnot R( i,j,e^{\prime}))\\ 
&\land \forall (i,j) \neq (i^{\prime},j^{\prime}) \in [l] \times [d]. \forall e \in [n]. (\lnot R(i,j,e ) \lor \lnot R( i^{\prime},j^{\prime},e ))\\
&\land \forall e \in [n]. \exists (i,j) \in [l] \times [d]. R( i,j,e )
\end{align*}
%Here, $\langle \cdot \rangle$ denotes a $\Sigma^{B}_{0}$-definable tupling function which is monotone for each input. 
The propositional formula $UCP^{l,d}_{n}$ is defined as follows:
\begin{align*}
UCP^{l,d}_{n}:=
\begin{cases}
\lnot \bigg(\bigwedge_{i=1}^{l}\left( \left(\bigwedge_{j=1}^{d} \bigvee_{e \in [n]} r_{i,j,e} \right) \lor \left( \bigwedge_{j=1}^{d} \lnot \bigvee_{e \in [n]} r_{i,j,e} \right)\right) \\
\land \bigwedge_{(i,j) \in [l] \times [d]} \bigwedge_{e \neq e^{\prime} \in [n]} (\lnot r_{i,j,e} \lor \lnot r_{i,j,e^{\prime}})\\ 
\land \bigwedge_{(i,j) \neq (i^{\prime},j^{\prime}) \in [l] \times [d]} \bigwedge_{e \in [n]} (\lnot r_{i,j,e} \lor \lnot r_{i^{\prime},j^{\prime},e})\\
\land \bigwedge_{e \in [n]} \bigvee_{(i,j) \in [l] \times [d]} r_{i,j,e} \bigg) 
\quad (\mbox{if $n \not \equiv 0 \pmod d$, $d \geq 1$})\\
1 \quad (\mbox{otherwise})
\end{cases}
\end{align*}
As in the previous definitions, we abuse the notation and use $UCP^{l,d}_{n}$ to express $UCP(l,d,n,R)$.
\end{defn}
Intuitively, $UCP^{l,d}_{n}$ states ``if $n \not \equiv 0 \pmod d$, then there does not exist a family $\{S_{i}\}_{i \in [l]}$ which consists of $d$-sets and emptysets which give a partition of $[n]$.'' Each variable $r_{i,j,e}$ reads ``the $j$-th element of $S_{i}$ is $e$.''
 
 We observe the following: 
 \begin{proposition}\label{VTC0provesUCP}
  $\VTC^{0} \vdash UCP^{l,d}_{k}$.
 \end{proposition}
 For a detailed treatment of the bounded arithmetic $\VTC^{0}$, see \S IX.3 of \cite{Cook}.
 \begin{proof}[Proof Sketch.]
 Work in $\VTC^{0}$.
 Suppose a bounded set $R$ violates $UCP^{l,d}_{k}$, that is:
 \begin{itemize}
  \item $k \not\equiv 0 \pmod d$.
  \item $[k]$ is partitioned into $\{S_{i}\}_{i \in [l]}$, where $S_{i}=\{e \in [k] \mid \exists j \in [d].\ R(i,j,e)\}$.
  \item Each $S_{i}$ is either empty or $[d]$-set.
  In the latter case, it is witnessed by the bijection 
  \[B_{i}:=\{( j,e ) \mid R(i,j,e)\}.\]
 \end{itemize}
 Crucial properties of $\VTC^{0}$ we use are:
 \begin{enumerate}
  \item There is a $\Sigma^{B}_{1}$-definable function $\#S$, which counts the cardinality of the input bounded set $S$.
  \item\label{bijimplieseq} $\VTC^{0}$ proves the following: if a bounded set $M$ codes a bijection between two bounded sets $A$ and $B$, then $\#A=\#B$.
 \end{enumerate}
 Armed with these, we can derive a contradiction in the following way: 
 we consider the cardinality of $S=\{( i, j, e ) \in [l] \times [d] \times [k] \mid R(i,j,e)\}$.
 We can show that $\#S_{i}$ is $0$ if it is empty and $d$ otherwise.
 In the latter case, we use the item \ref{bijimplieseq} above.
 Furtheremore, by induction on $i \in [l]$, we see that $\#\bigcup_{i' \leq i} S_{i} \equiv 0 \pmod d$.
 Considering the case $i=l$, we have $\# S \equiv 0 \pmod d$.

 On the other hand, there is a $\Sigma^{B}_{0}$-definable bijection between $S$ and $[k]$:
 \[S \rightarrow [k];\ ( i,j,e) \mapsto e.\]
 The fact that the above map is a bijection follows immediately from the assumption on $R$.
 Hence, using the item \ref{bijimplieseq} again, we have $\#S=k \not\equiv 0 \pmod d$, a contradiction. 
 \end{proof}
 
 \begin{proposition}
 The following hold:
 \begin{enumerate}
  \item\label{UCPCount^{p}} For any $p \geq 2$, $V^{0}+UCP^{l,d}_{k} \vdash Count^{p}_{n}$.
  \item\label{UCPontoPHP} $V^{0}+UCP^{l,d}_{k} \vdash ontoPHP^{m}_{n}$.
 \end{enumerate}
 \end{proposition}
 
 \begin{proof}
  As for \ref{UCPCount^{p}}, argue in $V^{0}$ as follows: suppose $n \not \equiv 0 \pmod p$, and $R$ gives a $p$-partition of $[n]$. Set the family $\{S_{r}\}_{r \in [(n+1)^{p}]}$ by:
 \begin{align*}
 S_{r}:=
 \begin{cases}
  \mbox{the set coded by $r$} &(\mbox{if} \quad r \in R)\\
  \emptyset &(\mbox{otherwise})
 \end{cases}
 \end{align*}
 Then $\{S_{r}\}_{r\in [(n+1)^{p}]}$ indeed violates $UCP^{(n+1)^{p},p}_{n}$.
 
 As for \ref{UCPontoPHP}, argue in $V^{0}$ as follows: suppose $m>n$ and $R$ gives a bijection between $[m]$ and $[n]$. Then, $\{[n]\}$ violates $UCP^{1,m}_{n}$.
 \end{proof}
 
 Hence, $UCP^{l,d}_{n}$ is indeed a generalization of counting principles. It is natural to ask
 \begin{question}\label{UCPinjPHP}
 Does the following hold?: 
 \[V^{0}+ UCP^{l,d}_{k} \vdash injPHP^{n+1}_{n},\]
 or, at least,
 \[V^{0} + ontoPHP^{M}_{m} \vdash injPHP^{n+1}_{n}.\]
 
 \end{question}
 In view of theorem \ref{CountpinjPHP}, the author conjectures the answer to the problem is no. We tackle this issue in section \ref{On Question UCPinjPHP}.
 
 Here, we consider one more generalization of counting principles (which relates to the above problem):
 \begin{defn}
 $GCP(P, Q_{1},Q_{2},R_{1},R_{2}, M_{0},M_{1},M_{2})$ (which stands for \textit{Generalized Counting Principle}) is a $\Sigma^{B}_{0}$ $\mathcal{L}^{2}_{A}$-formula expressing the following statement: bounded sets 
 \begin{align*}
 P, Q_{1},Q_{2},R_{1},R_{2}, M_{0},M_{1},M_{2}
 \end{align*}
  cannot satisfy the conjunction of the following properties:
 \begin{enumerate}
  \item $M_{0}$ codes a bijection between $(P \times Q_{1}) \sqcup R_{1}$ and $(P \times Q_{2}) \sqcup R_{2}$.
  \item $M_{1}$ is an injection from $R_{1}$ to $R_{2}$ such that some element $a \in R_{2}$ is out of its range.
  \item $M_{2}$ is an injection from $R_{2}$ to $P$ such that some element $b \in P$ is out of its range.
 \end{enumerate}
 \end{defn}
 \begin{rmk}
 We can consider the propositional translation of $GCP$ as well as the previous examples $UCP^{l,d}_{n}$, $Count^{p}_{n}$, etc. However, we do not write it down here because we are not using it at this time.
 \end{rmk}
 It is easy to see that:
 \begin{proposition}
 $\VTC^{0} \vdash GCP$.
 \end{proposition}
 
 \begin{proof}[Proof Sketch]
 Analogously with Proposition \ref{VTC0provesUCP}, we can derive an arithmetical contradiction in number sort by applying $\#$ to the bounded sets listed in the definition of $GCP$ and their products.
 \end{proof}
 
 \begin{proposition}
 \begin{enumerate}
  \item $V^{0} +GCP \vdash UCP^{l,d}_{n}$.
  \item $V^{0}+GCP \vdash injPHP^{n+1}_{n}$.
 \end{enumerate}
 \end{proposition}
 \begin{proof}
  Work in $V^{0}+GCP$.
  
  We first prove $UCP^{l,d}_{n}$. Suppose $n \not \equiv 0 \pmod d$, and $\{S_{i}\}_{i \in [l]}$ is a family consisting of $d$-sets and emptysets, and $[n]=\bigsqcup_{i \in [l]}S_{i}$. Set 
  \begin{align*}
  Q:=\{i \in [l] \mid S_{i} \neq \emptyset\}.
  \end{align*}
  Then the partition gives a bijection between $[n]$ and $Q \times [d] \sqcup \emptyset$.
  
  On the other hand, since $n \not \equiv 0 \pmod d$, we can write $n=ds+r$ where $1 \leq r < d$. 
  This gives a natural bijection between $[n]$ and $[d] \times [s] \sqcup [r]$.
  
  Using $\Sigma^{B}_{0}$-COMP (cf. Definition V.1.2 in \cite{Cook}), it is straightforward to construct proper injections from $\emptyset$ to $[r]$ and from $[r]$ to $[d]$. Thus, $GCP$ is violated, which is a contradiction.
  
  We next prove $injPHP^{n+1}_{n}$. Suppose $R$ gives an injection from $[n+1]$ to $[n]$. Then, there is a natural bijection between $[n]$ and $[n+1] \times [1] \sqcup ([n]\setminus \ran R)$.
  
  On the other hand, there is a natural bijection between $[n]$ and $[n+1]\times \emptyset \sqcup [n]$.
  
  It is easy to construct proper injections from $[n]\setminus \ran R$ to $[n]$, and from $[n]$ to $[n+1]$. Thus, $GCP$ is violated, which is a contradiction.
 \end{proof}
 It is natural to ask:
\begin{question}
\begin{enumerate}
 \item\label{UCPGCP} Does the following hold: $V^{0}+ UCP^{l,d}_{k} \vdash GCP$?
 \item\label{morenatural} Is there any other combinatorial principle than $GCP$ which also implies $injPHP^{n+1}_{n}$ and some of $Count^{p}_{n}$? \end{enumerate}
\end{question}
On the item \ref{UCPGCP}, the author conjectures the answer is no (since $GCP$ implies $injPHP^{n+1}_{n}$).

As for the item \ref{morenatural}, we consider the oddtown theorem in \S \ref{On the strength of the oddtown theorem}.

\subsection{Nullstellensatz proof system}
 \quad In the analysis of this paper, Nullstellensatz proofs (written shortly as ``$NS$-proofs'') play an essential role in the arguments.
We set up our terminology on $NS$-proofs and review degree lower bounds for $injPHP$.
\begin{defn}
Let $A$ be a (commutative) ring, and
 $\mathcal{F}$ be a set of multivariate $A$-polynomials. 
 For multivariate polynomials $g_{1}$, $g_{2}$ and $h_{f}$ $(f \in \mathcal{F})$ over $A$, \textit{$\{h_{f}\}_{f \in \mathcal{F}}$ is a $NS$-proof of $g_{1}=g_{2}$ from $\mathcal{F}$} over $A$ if and only if 
 \begin{align*}
 g_{1}-g_{2}=\sum_{f \in \mathcal{F}}h_{f}f.
 \end{align*} 
 Especially, for $a \in A \setminus \{0\}$, a $NS$-proof of $a=0$ from $\mathcal{F}$ is called \textit{a NS-refutation of $\mathcal{F}$}.
 
The \textit{degree} of a $NS$-proof $\{h_{f}\}_{f \in \mathcal{F}}$ is defined by $\max_{f \in \mathcal{F}}(\deg(h_{f})+\deg(f))$. 
Here, we adopt the convention $\deg 0:= -\infty$.
\end{defn}

We are particularly interested in the following family of polynomials, which is another representation of the pigeonhole principle:

\begin{defn}
Let $M, m \in \NN$, and $M>m$.
Given a ring $A$, we define $\lnot injPHP^{M}_{m}[A]$ as the family of the following multivariate polynomials over $A$:
\begin{align}
 &x_{ij}^{2}-x_{ij} \quad (i \in [M], j \in [m])\label{booleanaxiom},\\
 &x_{ij}x_{ij^{\prime}} \quad (i \in [M], j\neq j^{\prime} \in [m]), \label{1st}\\
 &x_{ij}x_{i^{\prime}j} \quad (i\neq i^{\prime} \in [M], j \in [m]),\label{2nd}\\
 &\sum_{j=1}^{m} x_{ij} -1 \quad (i \in [M]),\label{3rd}
\end{align}
(each $x_{ij}$ is distinct.)

Furthermore, we define $\lnot inj^{*}PHP^{M}_{m}[A]$ as the family of the following multivariate polynomials over $A$:
\begin{align*}
 &x_{ij}^{2}-x_{ij}, u_{j}^{2}-u_{j} \quad (i \in [M], j \in [m]),\\
 &x_{ij}x_{ij^{\prime}} \quad (i \in [M], j\neq j^{\prime} \in [m]), \\
 &x_{ij}x_{i^{\prime}j} \quad (i\neq i^{\prime} \in [M], j \in [m]),\\
 &\sum_{j=1}^{m} x_{ij} -1 \quad (i \in [M]),\\
 &\sum_{i=1}^{M} x_{ij} + u_{j} -1 \quad (j \in [m])
\end{align*}
($x_{ij}$ and $u_{j}$ are distinct indeterminates).
\end{defn}

\begin{rmk}
It is easy to see that $\lnot injPHP^{M}_{m}[A]$ is not satisfiable in $A$ if $A$ is a field; the system is a paraphrase of the pigeonhole principle since $x^{2}-x=0$ implies $x=0,1$ in a field.
This unsatisfiablity of $\lnot injPHP^{M}_{m}[A]$ is extended to the case when $A$ is a general nontrivial ring since there always exists a ring homomorphism from $A$ to a field, and all the coefficients of the polynomials in $\lnot injPHP^{M}_{m}[A]$ are among $0, \pm 1$.

Furthermore, if $A$ is nontrivial, then $\lnot injPHP^{M}_{m}[A]$ always has a $NS$-refutation over $A$.
Indeed, the unit of $A$ generates a subring of $A$ which is isomorphic to $\ZZ$ or $\ZZ/m\ZZ$ ($m \geq 2$).
$\lnot injPHP^{M}_{m}[\ZZ]$ has a $NS$-refutation over $\ZZ$ since we have a $NS$-refutation over $\QQ$.
We know that $\lnot injPHP^{M}_{m}[\ZZ/m\ZZ]$ has a $NS$-refutation over $\ZZ/m\ZZ$ when $m$ is prime.
For composite $m$, taking a prime divisor $p$ of it, we have an embedding of $\ZZ/p\ZZ$ into $\ZZ/m\ZZ$ as additive groups.
Since all the coefficients of the polynomials in $\lnot injPHP^{M}_{m}[\ZZ/m\ZZ]$ are among $0, \pm 1$, we can transform a $NS$-refutation $(h_{f})_{f}$ over $\ZZ/p\ZZ$ to a $NS$-refutation over $\ZZ/m\ZZ$ by simply mapping each coefficient of $h_{f}$ by the embedding.  

Note that Nullstellensatz proof system is not always complete for general coefficient rings.
See the appendix of \cite{Bussdesign} for details.
\end{rmk}

We have introduced $\lnot inj^{*}PHP^{M}_{m}[A]$ since it makes the translation of $injPHP$-trees into $NS$-refutations in the proof of Theorem \ref{noeval} simpler, although it is an easy observation that $\lnot injPHP^{M}_{m}$ and $\lnot inj^{*}PHP^{M}_{m}$ are equivalent in the sense of degree of $NS$-refutation:
\begin{proposition}\label{injPHPandinj*PHPareequivalent}
Let $A$ be a ring.
 For $M,m,d \in \NN$ with $M > m$, the following are equivalent:
 \begin{enumerate}
 \item\label{injPHPNS} $\lnot injPHP^{M}_{m}[A]$ has a $NS$-refutation over $A$ of degree $\leq d$.
 \item\label{inj*PHPNS} $\lnot inj^{*}PHP^{M}_{m}[A]$ has a $NS$-refutation over $A$ of degree $\leq d$.
 \end{enumerate}
\end{proposition}
\begin{proof}
 (\ref{injPHPNS}) $\Rightarrow$ (\ref{inj*PHPNS}) follows from $\lnot injPHP^{M}_{m}[A] \subseteq \lnot inj^{*}PHP^{M}_{m}[A]$ as families of polynomials.
 
 We consider the converse.
 Suppose $\lnot inj^{*}PHP^{M}_{m}[A]$ has a $NS$-refutation of degree $\leq d$.
  Applying the substitution $u_{j}:=1-\sum_{i=1}^{M} x_{ij}$ ($j \in [m]$), we obtain a $NS$-refutaion of 
  \[\lnot injPHP^{M}_{m} \cup \left\{\left(1-\sum_{i=1}^{M} x_{ij}\right)^{2}-\left(1-\sum_{i=1}^{M} x_{ij}\right) \mid j\in[m]\right\}\]
   over $A$ without increasing the degree. 
   Furthermore, 
\begin{align*}
\left(1-\sum_{i=1}^{M} x_{ij}\right)^{2}-\left(1-\sum_{i=1}^{M} x_{ij}\right) 
= \sum_{i=1}^{M}(x_{ij}^{2}-x_{ij}) + \sum_{i \neq i' \in [M]} x_{ij}x_{i'j}.
\end{align*}
Hence, we obtain a $NS$-refutation of $\lnot injPHP^{M}_{m}[A]$ over $A$ without increasing the degree. 
\end{proof}

As for the case when $A$ is a field, the following powerful degree lower-bound is well-known:
\begin{theorem}[\cite{Razborov}]\label{PHPNSdegreelowerbound}
Let $M,m \in \NN$ and $M>m$.
Assume $A$ is a field.
There is no $PC$-refutation of $\lnot injPHP^{M}_{m}[A]$ over $A$ with degree $\leq m/2$.
In particular, there is no $ NS$ refutation of $\lnot injPHP^{M}_{m}[A]$ over $A$ with degree $\leq m/2$.
\end{theorem}

 Here, $PC$ stands for ``Polynomial Calculus.'' 
 For the precise treatment of the formal system, see \S 6.2 of \cite{proofcomplexity}.

Towards the discussion in \S \ref{On Question UCPinjPHP}, we want to generalize the above degree lower bound for $A=\ZZ/n\ZZ$, where $n$ is a general integer satisfying $n \geq 2$. 
Precisely speaking, taking a closer look at the proof of Theorem \ref{PHPNSdegreelowerbound}, we can obtain the following generalization:

\begin{cor}[essentially by \cite{Razborov}]\label{Razborovlowerboundforfinitering}
Let $A$ be a finite nontrivial commutative ring.
Let $M,m\in \NN$ satisfy $M>m$.
Then there is no $NS$-refutation of $injPHP^{M}_{m}[A]$ of degree $\leq \frac{m}{2}$.
\end{cor}

We give a proof for completeness.
Below, we follow the presentation of \S 16.2 in \cite{proofcomplexity}.
Fix $M>m$ and a finite nontrivial commutative ring $A$ in the rest of this section.

Let $Maps$ be the set of partial bijections from $[M]$ to $[m]$.
For each $\alpha \in Maps$, set $x_{\alpha} := \prod_{i \in \dom(\alpha)} x_{i\alpha(i)}$.
In particular, $x_{\emptyset}:=1$.
Let $\hat{S} := A[\bar{x}]/I$, where $I$ is the ideal generated by the polynomials (\ref{booleanaxiom}), (\ref{1st}), (\ref{2nd}) in $injPHP^{M}_{m}[A]$.
For $f \in A[\bar{x}]$, $\hat{f}$ denotes the quotient of $f$ in $\hat{S}$.
Furthermore, for a subset $U \subseteq A[\bar{x}]$, set 
\[\hat{U}:=\{\hat{f} \in \hat{S} \mid f \in U\}.\]

\begin{defn}
 For a parameter $t \leq m$, define:
\begin{align*}
Maps_{t}&:=\{\alpha \in Maps \mid \#\alpha \leq t\}.\\
S_{t} &:= \{ f \mid f \in A[\bar{x}],\ \deg(f) \leq t\}.\\
T_{t}&:=\{x_{\alpha} \mid \alpha \in Maps_{t}\}.
\end{align*}
\end{defn}

Note that:
\begin{proposition}
For $t \leq m$,
\begin{enumerate}
 \item $\hat{S}_{t}$ is a free $A$-module of finite rank.
 \item $\hat{T}_{t}$ is a basis of $\hat{S}_{t}$, and $\#T_{t}=\#\hat{T}_{t}$.
\end{enumerate}
\end{proposition}

\begin{proof}
The first item follows from the second item.
We can show the second item as follows. 
The fact that $\hat{T}_{t}$ generates $\hat{S}_{t}$ straightforwardly follows from the definition of $I$ above.
We show that $\hat{T}_{t}$ is $A$-linearly independent and $\#T_{t}=\#\hat{T}_{t}$.
Suppose not.
Then we have
\[\sum_{\alpha \in Maps_{t}} c_{\alpha} x_{\alpha} \in I\]
for some $\{c_{\alpha}\}_{\alpha \in Maps_{t}} \subseteq A$ such that $c_{\alpha} \neq 0$ for at least one $\alpha$.
Take a minimal $\alpha_{0}$ of such, and consider the following substitution:
\[
x_{ij} \mapsto 
\begin{cases}
 1 \quad \mbox{(if $\alpha_{0}(i)=j$)}\\
 0 \quad \mbox{(otherwise)}.
\end{cases}
\]
Then every element in $I$ is evaluated to $0$, and $c_{\alpha}x_{\alpha}$ is evaluated to:
\[ \begin{cases}
 c_{\alpha_{0}} \quad \mbox{(if $\alpha=\alpha_{0}$)}\\
 0 \quad \mbox{(otherwise)}.
\end{cases}
\]
because of the minimality of $\alpha_{0}$.
Thus we have $c_{\alpha_{0}}=0$, a contradiction.
\end{proof}

We aim to show that no $a \in A \setminus \{0\}$ is in $O_{t}$ ($t \leq m/2$) below.
\begin{defn}
For $t \leq m$, let $O_{t}$ be the set of $f \in A[\bar{x}]$ such that $f$ has a degree $\leq t$ $PC$-proof from $injPHP^{M}_{m}[A]$.
\end{defn}

Towards it, the following notion is useful to describe the rewriting procedure of polynomials in concern:
\begin{defn}
Let $\alpha \in Maps$.
A \textit{pigeon dance} of $\alpha$ is the following non-deterministic process:
\begin{itemize}
\item Take the first (i.e. the smallest) pigeon $i_{1} \in \dom(\alpha)$ and move it to any unoccupied hole $j$ such that is larger than $\alpha(i_{1})$.
\item Then take the second smallest pigeon $i_{2} \in \dom(\alpha)$ and move it to any currently unoccupied hole larger than $\alpha(i_{2})$, etc., as long as this is possible.
\end{itemize}
We say \textit{pigeon dance is defined on $\alpha$} if this process can be completed for all the pigeons in $\dom(\alpha)$.
\end{defn}
 
\begin{defn}
Let $Maps^{*}$ be the set of all partial maps $\alpha \colon [M] \rightarrow [m] \sqcup\{0\}$ such that $\alpha$ is injective outside $\alpha^{-1}(0)$.
The partial bijection $\alpha \restriction_{\alpha^{-1}([m])}$ is denoted by $\alpha^{-}$.
\end{defn}
For $\alpha \in Maps^{*}$, extend the notation $x_{\alpha}$ as follows:
\[x_{\alpha} := x_{\alpha^{-}} \times \prod_{i \colon \alpha(i)=0}\left(\sum_{j=1}^{m}x_{ij}-1\right).\]

Note that $x_{\alpha}$ is no longer a monomial in general.
We consider the following subsets of $\hat{S}_{t}$:
\begin{defn}
For $t \leq m$, define:
\begin{enumerate}
 \item $B_{t} \subseteq S_{t}$ as the set of all $x_{\alpha} \in Maps^{*}$ such that $\#\alpha \leq t$ and a pigeon dance is defined on $\alpha^{-}$.
 \item $C_{t}:=B_{t}\setminus T_{t}$ and $\Delta_{t} := B_{t} \cap T_{t}$, that is, $\Delta_{t}$ consists of the monomials $x_{\alpha}$ for $\alpha \in Maps_{t}$ such that a pigeon dance is defined on $\alpha$.
\end{enumerate}
\end{defn}

Now, Corollary \ref{Razborovlowerboundforfinitering} is an immediate consequence of the following lemma since $1 \in \Delta_{t}$:

\begin{lemma}
For $t \leq m/2$,
\[\hat{S}_{t} = \hat{O}_{t} \oplus A \hat{\Delta}_{t},\ \hat{O}_{t}=A\hat{C}_{t}\]
as $A$-modules.
In particular, any non-zero scalar in $A$ does not belong to $O_{t}$.
\end{lemma}

\begin{proof}
First, we show that $\hat{B}_{t}=\hat{C}_{t} \cup \hat{\Delta}_{t}$ spans $\hat{S}_{t}$.
For the monomials 
\[x_{\gamma} = x_{i_{1}j_{1}}\cdots x_{i_{k}j_{k}} \ \mbox{and} \ x_{\delta}= x_{u_{1}v_{1}} \cdots x_{u_{l}v_{l}}\quad (i_{1} < \cdots < i_{k} \ \& \ u_{1} < \cdots < u_{l})\]
from $T_{t}$, define a partial ordering $x_{\gamma} \preceq x_{\delta}$ by the condition that:
\begin{itemize}
 \item either $k < l$, or $k=l$ and for the largest $w$ such that $j_{w} \neq v_{w}$ it holds that $j_{w} < v_{w}$.
\end{itemize}

\begin{claim}\label{rewritingprocedure}
 The monomial $x_{\gamma}$ can be expressed as 
\[\widehat{x_{\gamma}} = \sum_{x_{\alpha} \in X} c_{\alpha}\widehat{x_{\alpha}} + \sum_{x_{\beta} \in Y} c_{\beta}\widehat{x_{\beta}}\]
in $\hat{S}$ for some $X \subseteq C_{t}$ and $Y \subseteq \Delta_{t}$, and non-zero coefficients $c_{\alpha}$'s from $A$. 
(Note that $X \cap Y = \emptyset$ by definition of $C_{t}$ and $\Delta_{t}$.)
Further, for all $x_{\alpha} \in X$ and $x_{\beta} \in Y$, $\dom(\alpha) \cup \dom(\beta) \subseteq \dom(\gamma)$.
\end{claim}

We may assume $i_{1} < \cdots<i_{k}$ in $x_{\gamma}$ above and also $x_{\gamma} \not \in \Delta_{t}$.
We use induction on $\preceq$. 
Assume the claim holds for all terms $\preceq$-smaller than $x_{\gamma}$.
In the following, we omit the symbol $\hat{(\cdot)}$ representing the quotient map for readability.
In $\hat{S}$, rewrite $x_{\gamma}$ as
\begin{align*}
x_{i_{2}j_{2}} \cdots x_{i_{k}j_{k}} &- \sum_{\substack{j_{1}'<j_{1}\\ j_{1}' \not \in \{j_{2}, \ldots, j_{k}\}}} x_{i_{1}j_{1}'} \cdots x_{i_{k}j_{k}} - \sum_{\substack{j_{1}'>j_{1}\\ j_{1}' \not \in \{j_{2}, \ldots, j_{k}\}}}x_{i_{1}j_{1}'} \cdots x_{i_{k}j_{k}} \\
&+ \left(\sum_{j=1}^{m}x_{i_{1}j}-1\right)x_{i_{2}j_{2}}\cdots x_{i_{k}j_{k}}. 
\end{align*}
The first term and the terms in the first summation are all $\preceq$-smaller than $x_{\gamma}$, and their domain is included in $\dom(\gamma)$.
Thus, the statement for them follows by the induction hypothesis.
Moreover, the statement for the first term implies that for the last term.
Note that $x_{\alpha} \in B_{t-1}$ implies $\left(\sum_{j=1}^{m}x_{i_{1}j}-1\right)x_{\alpha} \in AB_{t}+I$.

The collection of all the terms in the second summation can be interpreted as describing all possible moves of $i_{1}$ in the attempted pigeon dance of $\gamma$.
To simulate other steps in all possible pigeon dances, rewrite each of these terms analogously using (\ref{3rd}) for $i_{2}$, and so on.
We have assumed $x_{\gamma} \not \in \Delta_{t}$, that the pigeon dance cannot be completed.
Therefore, the rewriting procedure must eventually produce only terms $\preceq$-smaller than $x_{\gamma}$.
This completes the proof of Claim \ref{rewritingprocedure}.

Next, we establish the linear independence of $\hat{B}_{t}=\hat{C}_{t} \cup \hat{\Delta}_{t}$.
It suffices to show 
\[\#B_{t} \leq \# T_{t} = \#\hat{T}_{t}\]
 since $\hat{B}_{t}$ spans $\hat{S}_{t}$, $\hat{T}_{t}$ is its basis, $\#\hat{B}_{t} \leq \#B_{t}$, and $A$ is finite.
 Note that $\#\hat{B}_{t}=\#B_{t}=\#T_{t}$ follows.
 
 Towards the above inequality, consider another procedure \textit{minimal pigeon dance}, which is described as follows:
 given $\alpha \in Maps^{*}$, it is defined by the following instructions:
 \begin{enumerate}
  \item Put $\alpha_{1}:=\alpha$.
  \item For $i=1,\ldots, M$, if $i \in \dom(\alpha_{i})$, move it to the smallest free hole $j > \alpha(i)$ and let $\alpha_{i+1}$ be the resulting map.
  If $i \not \in \dom(\alpha_{i})$, do nothing and put $\alpha_{i+1}:=\alpha_{i}$.
  \item $D(\alpha)$ is the result of this process applied to $\alpha$.
 \end{enumerate}
 
 The following claims are purely combinatorial results on minimal pigeon dance, and the coefficient ring $A$ does not matter, so we state them without proof.
 See the proof of Lemma 16.2.2 in \cite{proofcomplexity} for concrete proofs.
 
 \begin{claim}
 For all $t \leq m/2$, $D$ is defined on the whole of $B_{t}$.
 \end{claim}
 
 \begin{claim}\label{minimalpigeondanceisinjection}
 $D$ is an injective map from $B_{t}$ into $T_{t}$ for $t \leq m/2$.
 \end{claim}
 
 Claim \ref{minimalpigeondanceisinjection} implies $\# B_{t} \leq \# T_{t}$.
 
 %It suffices to show that each of the $M$ steps defining $D$ is injective.
 %Consider the $i$-th step and assume that $\alpha \neq \alpha'$ are mapped by it to $\beta$.
 %The domains of these maps are the same, and they all agree on all $i' \neq i$ in the domain.
 %But, if $\alpha(i) < \alpha(i')$, then we have $\beta(i) \leq \alpha'(i) < \beta(i)$ by definition  of minimal dance, a contradiction.
 %By symmetry, it must be $\alpha=\alpha'$.
 
 It remains to show that $\hat{C}_{t}$ is a basis of $\hat{O}_{t}$.
 We already know that $\hat{C}_{t}$ is $A$-linearly independent, so we focus on proving $\hat{C}_{t}$ spans $\hat{O}_{t}$.
 Clearly $C_{t} \subseteq O_{t}$ and $AC_{t}$ contains all the axioms (\ref{3rd}) in $injPHP^{M}_{m}[A]$ and is closed under the addition rule of $PC$.
 For the closure of the space under the multiplication rule, assume that 
 \[x_{\alpha} \in C_{t},\ s=\deg(x_{\alpha}) < t.\]
 It suffices to show that $x_{ij}x_{\alpha} \in AC_{s+1}+I$, which implies $\widehat{fx_{\alpha}} \in A\hat{C}_{t}$ for each monomial $f$ with $\deg(f) + \deg(x_{\alpha}) \leq t$ by induction on $\deg(f)$ and for each $f \in A[\bar{x}]$ with $\deg(f) + \deg(x_{\alpha}) \leq t$ since $A\hat{C}_{t}$ is closed under addition and scalar product.
 We use induction on $s$.
 By the definition of $C_{t}$, we can write $x_{\alpha}=x_{\beta} (1-\sum_{j=1}^{m}x_{i'j})$ for some $i' \in [M]$.
 As $\deg(x_{\beta}) <\deg(x_{\alpha}) = s$, $x_{ij}x_{\beta} \in AB_{s}+I$ by Claim \ref{rewritingprocedure} if $x_{\beta} \in T_{s-1}$ and by induction hypothesis if $x_{\beta} \in C_{s-1}$.
 Note that the degree does not increase.
 Multiplying $(1-\sum_{j=1}^{m}x_{i'j})$,
 we have $x_{ij}x_{\alpha} \in AC_{s+1}+I$.
\end{proof}

\section{On Question \ref{UCPinjPHP}}\label{On Question UCPinjPHP}
 As for Question \ref{UCPinjPHP}, the author conjectures the following:
 \begin{conj}\label{UCPdoesnotproveinjPHP}
  \begin{align*}
 F_{c}+UCP^{l,d}_{k} \not\vdash_{poly(n)} injPHP^{n+1}_{n}.
 \end{align*}
 \quad Here, for a family $\{\alpha_{\vec{k}}\}_{\vec{k} \in \NN}$ of propositional formulae, $F_{c}+\alpha_{\vec{k}}$ is the fragment of Frege system allowing the formulae with depth $\leq c$ only and admitting $\{\alpha_{\vec{k}}\}_{\vec{k}}$ as an axiom scheme.
Furthermore, $P \vdash_{poly(n)} \varphi_{n}$ means each $\varphi_{n}$ has a $poly(n)$-sized $P$-proof.
 \end{conj}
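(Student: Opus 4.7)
The plan is to follow the Ajtai--Beame--Pitassi--Krajicek--Pudlak paradigm, adapting the $k$-evaluation framework used in \cite{remake,proofcomplexity} to prove Ajtai's theorem. I would first introduce \emph{injPHP-trees}: decision trees whose internal nodes query a pigeon (with children ranging over its possible holes) or a hole (with children ranging over its possible pigeons), and whose paths record consistent partial injections $[n+1] \rightharpoonup [n]$. A \emph{$k$-evaluation using injPHP-trees} of an $F_{c}$-proof $\pi_{n}$ is an assignment of an injPHP-tree of height $\leq k$ to every subformula occurring in $\pi_{n}$, compatible with the Boolean connectives under the standard merge-and-prune operation, and sending every logical axiom and every $UCP^{l,d}_{k}$-instance to the constant-$1$ tree.

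The argument splits into two halves. The first is a \emph{switching lemma for injPHP-trees}: using random partial injections (for instance, assign each pigeon a random hole with a suitably small probability $p$, conditioned on the assignment being a genuine partial injection), show that every depth-$c$ subformula of a size-$s$ proof collapses, except with probability $o(1/s)$, to an injPHP-tree of height $O(\log s)$. A union bound then exhibits a single restriction under which the restricted proof admits a $k$-evaluation with $k = o(n)$. The second half is the impossibility statement announced in the abstract: no $o(n)$-evaluation of the endline formula $injPHP^{n+1}_{n}$ exists. The strategy here is to extract, from a hypothetical shallow $k$-evaluation, a Nullstellensatz refutation of the $injPHP$-polynomials of degree $O(k)$; this contradicts the field-independent degree lower bound of $\Omega(n)$ from \cite{Razborov}.

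The main obstacle will be the switching lemma itself. The standard Hastad/Beame argument assumes independent random restrictions of propositional variables, whereas here the $r_{ij}$ are tightly coupled by the injection constraints, so we need a matching-style switching lemma of the kind developed for the ordinary PHP in \cite{remake}, now calibrated to injPHP-trees and to restrictions that are legal partial injections rather than partial bijections. A secondary delicate point is verifying that each $UCP^{l,d}_{k}$-instance itself admits a shallow injPHP-tree decoration under a typical restriction; this couples the parameters $l$, $d$ and the restriction probability $p$, and is essentially what determines the precise quantitative form of the sufficient condition. Only after both halves are in place does the conjecture follow: a hypothetical $poly(n)$-sized $F_{c}+UCP^{l,d}_{k}$ proof of $injPHP^{n+1}_{n}$ would, via the switching lemma, produce an $o(n)$-evaluation of an equivalent proof, which the second half rules out.
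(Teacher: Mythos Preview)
The statement you are attempting to prove is a \emph{conjecture}; the paper does not prove it and explicitly says so. What the paper does establish is exactly your ``second half'': Theorem~\ref{noeval} shows that no sequence of proofs of $injPHP^{n+1}_{n}$ from $F_{c}+UCP^{l,d}_{k}$ can admit an $o(n)$-evaluation by $injPHP$-trees, by extracting from a failing $UCP$-instance a low-degree Nullstellensatz refutation of $\lnot injPHP$ over $\ZZ_{d}$ and invoking Razborov's field-independent lower bound. The ``first half''---your switching lemma for $injPHP$-trees under random partial injections---is \emph{not} proved; it is stated only as a sufficient condition (the Corollary following Theorem~\ref{noeval}), and the subsequent Remark says outright that establishing it ``seems beyond the current proof techniques'' and is tied to the open question of whether $V^{0}\vdash injPHP^{2n}_{n}$. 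So the genuine gap in your plan is precisely where you expected the main obstacle: a matching-style switching lemma calibrated to partial \emph{injections} (rather than partial bijections) is not known, and adapting \cite{remake} is not a matter of routine recalibration---the asymmetry between domain and range that makes $injPHP$ harder than $ontoPHP$ is exactly what breaks the known arguments.

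Two smaller mismatches are worth flagging. First, your $injPHP$-tree description omits the extra branch at a hole-query: in the paper's definition the node ``$?\mapsto j$'' has $|D|+1$ children, one for each pigeon \emph{plus} a child labelled $\langle j\rangle$ recording that $j$ is unused---this is essential, and is what later produces the extra variables $u_{j}$ in the NS system $\lnot inj^{*}PHP$. Second, the paper's notion of $k$-evaluation does \emph{not} require $UCP$-instances to be sent to the constant-$1$ tree. Instead, soundness for Frege rules plus $T\not\models injPHP^{n+1}_{n}$ forces some $UCP$-instance $I$ to satisfy $T\not\models I$, and the substantive work in Theorem~\ref{noeval} is the combinatorial construction of labelled trees $X_{i,j}$, $Y_{e}$ from that failing instance that yields the NS refutation. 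Baking the $UCP\mapsto 1$ condition into the definition, as you do, pushes that work back into the (already open) switching step rather than eliminating it.
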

 If this conjecture is true, then it follows that $V^{0}+UCP^{l,d}_{k} \not \vdash injPHP^{n+1}_{n}$ by the witnessing theorem and the translation theorem.
  In this section, we provide a sufficient condition to prove this conjecture.
 Our strategy is to adapt the proof technique of Ajtai's theorem to the situation.
 We define \textit{$injPHP$-tree}, and a \textit{$k$-evaluation using $injPHP$-tree}, and show that a Frege-proof of $injPHP^{n+1}_{n}$ admitting $UCP^{l,d}_{k}$ as an axiom scheme cannot have $o(n)$-evaluation. Taking the contrapositive, we obtain our sufficient condition.
 
 We start with introducing the following notions:
\begin{defn}
Let $D$ and $R$ be disjoint sets. 
A \textit{partial injection from $D$ to $R$} is a set $\rho$ which satisfies the following:
\begin{enumerate}
 \item Each $x \in \rho$ is either a 2-set having one element from $D$ and one element from $R$, or a singleton contained in $R$.
(In the former case, if $x=\{i,j\}$ where $i \in D$ and $j \in R$, we use a tuple $\langle i,j \rangle$ to denote $x$.  
In the latter case, if $x=\{j\}$ where $j \in R$, then we use 1-tuple $\langle j \rangle$ to denote $x$.)
 \item Each pair $x \neq x^{\prime} \in \rho$ are disjoint.
\end{enumerate}
The 2-sets in a partial injection $\rho$ give a partial bijection from $D$ to $R$. We denote it by $\rho_{bij}$. 
Also, we set $\rho_{sing}:=\rho \setminus \rho_{bij}$.

We define $v(\rho):=\bigcup_{x \in \rho}x$, $\dom(\rho) := v(\rho) \cap D$, and $\ran(\rho):=v(\rho) \cap R$.

 For two partial injections $\rho$ and $\tau$ from $D$ to $R$, 
\begin{enumerate}
 \item $\rho || \tau$ if and only if $\rho \cup \tau$ is again a partial injection.
 \item $\rho \perp \tau $ if and only if $\rho || \tau$ does not hold. In other words, there exist $x \in \rho$ and $y \in \tau$ such that $x \neq y$ and $x \cap y \neq \emptyset$.
 \item $\sigma \tau:= \sigma \cup \tau$.
\end{enumerate}
\end{defn}

%\begin{defn}
%For each $D$ and $R$,
 %$\mathcal{M}^{D}_{R}$ denotes the set of all partial injections from $D$ to $R$.
%\end{defn}

Now, we introduce an analogy of $PHP$-trees in this context: 
\begin{defn}
Let $D$ and $R$ be disjoint finite sets.
 An \textit{$injPHP$-tree over $(D,R)$} is a vertex-labeled and edge-labeled rooted tree defined inductively as follows:
 \begin{enumerate}
  \item The tree whose only vertex is its root and has no labels is an $injPHP$-tree over $(D,R)$.
  \item If the root is labeled by ``$i \mapsto ?$'' having $\#R$ children and each of its edges corresponding to each label ``$\langle i,j \rangle$'' ($j \in R$), and the subtree which the child under the edge labeled by ``$\langle i,j \rangle$'' induces is an $injPHP$-tree over $(D \setminus \{i\}, R\setminus \{j\})$, then the whole labeled tree is again an $injPHP$-tree over $(D,R)$.
  \item If the root is labeled by ``$? \mapsto j$'' having $(\#D+1)$ children and each of its edges corresponding to each label ``$\langle i,j \rangle$'' ($i \in D$) and ``$\langle j \rangle$,'' and the subtree which the child under the edge indexed by $\langle i,j \rangle$ induces is an $injPHP$-tree over $(D \setminus \{i\}, R\setminus \{j\})$ while the subtree which the child under the edge labeled by ``$\langle j \rangle$'' induces is an $injPHP$-tree over $(D, R\setminus \{j\} )$, then the whole tree is again an $injPHP$-tree over $(D,R)$.
 \end{enumerate}
 For an $injPHP$-tree $T$, we denote \textit{the height} (the maximum number of edges in its branches) of $T$ by $height(T)$ and the set of its branches by $br(T)$.\\
 \quad The pair $(T, L\colon br(T) \rightarrow S)$ is called a \textit{labeled $injPHP$-tree with label set $S$}. For each label $s \in S$, we set $br_{s}(T) := L^{-1}(s)$.
\end{defn}

\begin{convention}
When $T$ is an $injPHP$-tree over $(D,R)$,
 each branch $b \in br(T)$ naturally gives a partial injection, which is the collection of labels of edges contained in $b$. We often abuse the notation and use $b$ to denote the partial injection given by $b$.
\end{convention}

In the following, if there is no problem, we identify domains having the same size $n$ and denote them $D_{n}$. Similarly, we identify ranges $R$ having the same size $n$ and denote them $R_{n}$.
We assume $D_{m}$ and $R_{n}$ are disjoint for any $m,n \in \NN$.
\begin{defn}
For $m>n$, $\mathcal{M}^{m}_{n}$ denotes the set of all partial injections from $D_{m}$ to $R_{n}$.
\end{defn}
\begin{defn}
Let $\rho \in \mathcal{M}^{m}_{n}$ ($m > n$).
 Let $T$ be an $injPHP$-tree over $(D_{m},R_{n})$.
  We define the \textit{restriction} $T^{\rho}$ as the $injPHP$-tree over $(D_{m} \setminus \dom(\rho), R_{n} \setminus \ran(\rho))$ obtained from $T$ by deleting the edges with label incompatible with $\rho$, contracting the edges whose label are contained in $\rho$ (we leave the label of the child), and taking the connected component including the root of the tree.
\end{defn}

In particular, we are interested in shallow $injPHP$-trees.
The main reason is the following Lemma \ref{numberofbranches}.
\begin{defn}
For $\tau \in \mathcal{M}^{m}_{n}$ such that $\tau || \rho$, we set
\begin{align*}
 \tau^{\rho} := \tau \setminus \rho. 
\end{align*}
\end{defn}

\begin{lemma}\label{numberofbranches}
Let $T$ be an $injPHP$-tree and $\rho \in \mathcal{M}^{m}_{n}$ ($m > n$).
If $height(T) \leq n-\#\rho$, then the map
\[\{b \in br(T) \mid b || \rho\} \rightarrow br(T^{\rho});\ b \mapsto b^{\rho} \]
is bijective.
\end{lemma}

\begin{proof}
First, we observe that $b^{\rho} \in  br(T^{\rho})$ if $b \in br(T)$ satisfies $b||\rho$.
Indeed, no edge in $b$ is deleted by the restriction with $\rho$ since $b || \rho$, although some edges in $b$ may be contracted.

Next, we see the above map $b \mapsto b^{\rho}$ is injective.
Indeed, if $b \neq b' \in br(T)$, $b||\rho$, and $b'||\rho$, then $b \perp b'$.
Let $\pi \in b$ and $\pi' \in b'$ witness $b \perp b'$, that is, one of the following holds:
\begin{itemize}
 \item $\pi=\langle i,j \rangle$, $\pi'=\langle i,j' \rangle$, and $j \neq j'$.
 \item $\pi=\langle i,j \rangle$, $\pi'=\langle i',j \rangle$, and $i \neq i'$.
  \item $\{\pi,\pi'\}=\{\langle i,j \rangle,\langle j \rangle \}$.
\end{itemize}
Since $b||\rho$ and $b'||\rho$, we have $\pi',\pi \not \in \rho$, and therefore $\pi \in b^{\rho}$ and $\pi' \in (b')^{\rho}$ follow.
It implies $b^{\rho} \perp (b')^{\rho}$.

Lastly, we show that $b \mapsto b^{\rho}$ is surjective.
Let $r \in br(T^{\rho})$.
By definition of $T^{\rho}$, there exists a vertex $v$ of $T$ such that $a||\rho$ and $r=a^{\rho}$, where $a$ is the partial injection induced by the path from the root to $v$ in $T$.
Let $v$ be the most distant from the root among such.
It suffices to show that $v$ is actually a leaf of $T$, and therefore $a \in br(T)$.
Suppose $v$ is not a leaf of $T$. 
If $h$ is the height of $v$, then $h < n-\#\rho$ by assumption.
It implies $\# a + \# \rho < n$.
Hence, whatever the label of $v$ is, there exists an edge from $v$ whose label is compatible with $\rho$, contradicting the maximality of $v$.
\end{proof}

\begin{defn}
In the setting of the previous Lemma, we denote the inverse mapping by;
\[br(T^{\rho}) \rightarrow \{b \in br(T) \mid b || \rho\};\ r \mapsto r^{-\rho}. \]
\end{defn}

\begin{defn}
Let $\rho \in \mathcal{M}^{m}_{n}$ ($m>n$). 
For $\{r_{ij}\}_{i \in D_{m}, j\in R_{n}}$-propositional formula $\varphi$ (by a natural identification of variables, we regard each variable $r_{ij}$ is utilized to construct the propositional formula $injPHP^{m}_{n}$), we define \textit{the restriction} $\varphi^{\rho}$ by applying $\varphi$ the following partial assignment: for each $i \in [m]$ and $j \in [n]$,
\begin{align*}
 r_{ij} \mapsto 
 \begin{cases}
  1 &\quad (\mbox{if $\langle i,j \rangle \in \rho$})\\
  0 &\quad (\mbox{if $\{\langle i,j \rangle\} \perp \rho$})\\
  r_{ij} &\quad (\mbox{otherwise})
 \end{cases}
\end{align*}
For a set $\Gamma$ of $\{r_{ij}\}_{i \in D_{m}, j\in R_{n}}$-propositional formulae, define
\begin{align*}
\Gamma^{\rho}:=\{\varphi^{\rho}\mid \varphi \in \Gamma\}.
\end{align*}

\end{defn}

\begin{eg}
Let $\rho \in \mathcal{M}^{m}_{n}$ ($m>n$).
 Then, by suitable change of variables, $(injPHP^{m}_{n})^{\rho}$ is equivalent to $injPHP^{m-\#\dom(\rho)}_{n-\#\ran(\rho)}$ (over $AC^{0}$-Frege system, mod $poly(m,n)$-sized proofs).
\end{eg}

\begin{defn}
Let $m>n$.
 Let $T$ be an $injPHP$-tree over $(D_{m},R_{n})$ with height $h$. Given a set $S \subseteq br(T)$ and a family $(T_{b})_{b \in br(T)}$ of $injPHP$-trees where each $T_{b}$ is over $(D_{m}\setminus \dom (b),R_{n} \setminus \ran(b))$, we define \textit{the concatenated tree} 
 \[T*\sum_{b \in S} T_{b}\]
  as follows:
  for each $b \in S$, concatenate $T_{b}$ under $b$ in $T$ identifying the leaf of $b$ and the root of $T_{b}$ (and leaving the label of the root of $T_{b}$).
  
  For two $injPHP$-trees $T$ and $U$ over $(D_{m},R_{n})$,
  we define
  \begin{align*}
  T*U:= T*\sum_{b \in br(T)} U^{b}.
  \end{align*}
\end{defn}

\begin{lemma}\label{numberofbranchproducts}
In the setting of the previous Definition, the following map is a bijection:
\begin{align*}
\{&( b,b'  ) \mid b \in S,\ b' \in br(T_{b}) \}\sqcup (br(T) \setminus S) \rightarrow br\left(T*\sum_{b \in S} T_{b}\right);\\ 
&\begin{cases}
( b,b'  ) &\mapsto bb' \\
b \in br(T) \setminus S &\mapsto b
\end{cases}. 
\end{align*}
 
\end{lemma}

\begin{proof}
Clear.
\end{proof}

\begin{defn}
 Let $\Gamma$ be a subformula closed set of $\{r_{ij}\}_{i \in D_{m}, j\in R_{n}}$-formulae ($m>n$). 
 A \textit{$k$-evaluation (using $injPHP$-trees) of $\Gamma$} is a map 
 \[T_{\cdot} \colon \varphi \in \Gamma \mapsto T_{\varphi}\]
  satisfying the following:
 \begin{enumerate}
  \item Each $T_{\varphi}$ is a labeled $injPHP$-tree over $(D_{m},R_{n})$ with label set $\{0,1\}$ and height $\leq k$.
  \item $T_{0}$ is the $injPHP$-tree with height $0$, whose only branch is labeled by $0$.
  \item $T_{1}$ is the $injPHP$-tree with height $0$, whose only branch is labeled by $1$.
  \item $T_{r_{ij}}$ is the $injPHP$-tree over $(D_{m},R_{n})$ with height $1$, whose label of the root is $i \mapsto ?$ and $br_{1}(T_{r_{ij}})=\{\langle i,j \rangle\}$.
  \item $T_{\lnot \varphi} = T_{\varphi}^{c}$, that is, $T_{\lnot \varphi}$ is obtained from $T_{\varphi}$ by flipping the labels $0$ and $1$.
  \item $T_{\bigvee_{i \in I} \varphi_{i}}$ (where each $\varphi_{i}$ does not begin from $\lor$) represents $\bigcup_{i \in I} br_{1}(T_{\varphi_{i}})$. Here, we say a $\{0,1\}$-labeled $injPHP$-tree $T$ represents a set $\mathcal{F}$ of partial injections if and only if the following hold:
  \begin{enumerate}
   \item For each $b \in br_{1}(T)$, there exists a $\sigma \in \mathcal{F}$ such that $\sigma \subseteq b$.
   \item For each $b \in br_{0}(T)$, every $\sigma \in \mathcal{F}$ satisfies $\sigma \perp b$.
  \end{enumerate}
 \end{enumerate}
\end{defn}

\begin{eg}
Given a list $F=\{ \sigma_{1}, \ldots, \sigma_{N}\}$, where $\sigma_{1}, \ldots, \sigma_{N}$ are partial injections from $D$ to $R$, we define the $\{0,1\}$-labeled $injPHP$-tree $T_{F}$ over $(D,R)$ inductively as follows:
\begin{enumerate}
 \item If $F$ is empty, $T_{F} := T_{0}$.
 \item If some $\sigma_{i}$ is an empty map, $T_{F} := T_{1}$.
 \item Otherwise, ask where to go for each $v \in v(\sigma_{1})$. Let $T$ be the obtained $injPHP$-tree.
 \item For each branch $b \in br(T)$, consider $F^{b}$ below:  
 \begin{align*}
 F^{b}:=\{ \sigma_{i}^{b} \mid \sigma_{i} || b\}.
 \end{align*}
 Construct $T_{F^{b}}$ over $(D\setminus \dom(b), R\setminus \ran(b))$ inductively, and
 set 
 \[T_{F} := T*\sum_{b \in br(T)} T_{F^{b}}.\]
\end{enumerate} 
$T_{F}$ clearly represents $F$ (we regard $F$ as a set here).
\end{eg}

\begin{defn}
Let $T=(S,L)$ be a labeled $injPHP$-tree over $(D_{m},R_{n})$ ($m>n$).
Let $\rho \in \mathcal{M}^{m}_{n}$ and assume $height(T) \leq n- \#\rho$. 
Then \textit{the restricted labeled $injPHP$-tree $T^{\rho}=(S',L')$} is defined as follows:
\[S':=S^{\rho},\ L'(r):=L(r^{-\rho}).\]
\end{defn}

\begin{eg}
If a $\{0,1\}$-labeled $injPHP$-tree $T$ over $(D_{m},R_{n})$ ($m>n$) represents $\mathcal{F}$, $\rho \in \mathcal{M}^{m}_{n}$, and $height(T) \leq n- \#\rho$, then $T^{\rho}$ represents $\mathcal{F}^{\rho}$.
 Indeed, for $r \in br_{1}(T^{\rho})$, there exists $\sigma \in \mathcal{F}$ such that $\sigma \subseteq r^{-\rho}$.
 Hence $\sigma || \rho$ and it gives $\sigma^{\rho} \in \mathcal{F}^{\rho}$, $\sigma^{\rho} \subseteq b^{\rho}$.
 On the other hand, for $r \in br_{0}(T^{\rho})$, each $\sigma \in \mathcal{F}$ satisfies $\sigma \perp r^{-\rho}$. 
 Therefore, for all $\sigma$ such that $\sigma || \rho$, $\sigma^{\rho} \perp (r^{-\rho})^{\rho}=r$ holds.
\end{eg}

\begin{proposition}
Let $T_{\cdot}$ be a $k$-evaluation of a subfomula-closed set $\Gamma$ of $\{r_{ij}\}_{i \in D_{m},j \in R_{n}}$-formulae ($m>n$), $\rho \in \mathcal{M}^{m}_{n}$, $k \leq n-\#\rho$. 

Consider 
\[U_{\varphi}:=(T_{\varphi})^{\rho}.\]
 Note that the RHS is the restricted labeled $injPHP$-tree.
 
 $U_{\varphi}$ is an $injPHP$-tree over $(D_{m} \setminus \dom(\rho), R_{n} \setminus \ran(\rho))$, which can be regarded as $(D_{m-\#\dom(\rho)},R_{n-\#\ran(\rho)})$.
  In particular, we can regard $U_{\cdot}$ as a $k$-evaluation of $\Gamma^{\rho}$ of $\{r_{ij}\}_{i \in D_{m-\#\dom(\rho)},j \in R_{n-\#\ran(\rho)}}$-formulae.
\end{proposition}

\begin{proof}
Clear.
\end{proof}

\begin{theorem}\label{noeval}
Let $f \colon \NN \rightarrow \NN$ be a function satisfying $n<f(n) \leq n^{O(1)}$.
Suppose $(\pi_{n})_{n \geq 1}$ be a sequence of Frege-proofs such that $\pi_{n}$ proves $injPHP^{f(n)}_{n}$ using $UCP^{l,d}_{k}$ as an axiom scheme.

 Then there cannot be a sequence $(T^{n})_{n \geq 1}$ satisfying the following: each $T^{n}$ is an $o(n)$-evaluation of $\Gamma_{n}$ using $injPHP$-trees over $(D_{f(n)},R_{n})$, where $\Gamma_{n}$ is the set of all subformulae appearing in $\pi_{n}$.

\end{theorem}
\begin{proof}
Suppose otherwise. There exist $o(n)$-evaluations $T^{n}$ of $\Gamma_{n}$.
Fix a large enough $n$, and we suppress the superscript $n$ of $T^{n}$, and denote it simply by $T$.

 For $\varphi \in \Gamma_{n}$, define 
 \[T \models \varphi :\Leftrightarrow br_{1}(T_{\varphi})=br(T_{\varphi}).\]
 Then, we can show the following claims analogously with Lemma 15.1.7 and Lemma 15.1.6 in \cite{proofcomplexity}:
\begin{claim}
There exists a constant $c>0$, depending only on the formalization of the Frege system we use, such that the following holds: if $\varphi$ is derived from $\psi_{1}, \ldots, \psi_{k} \in \pi_{n}$ by a Frege rule in $\pi_{n}$, $\forall i \in [k].T \models \psi_{i}$, and $\forall i \in [k]. height(T_{\varphi}) \leq n/c$, then $T \models \varphi$.
\end{claim}
\begin{claim}
 $br_{1}(T_{injPHP^{f(n)}_{n}})= \emptyset$. In particular, $T \not \models injPHP^{f(n)}_{n}$.
\end{claim}
Also, the following fact is useful:
\begin{claim}
\begin{align*}
T \models \bigwedge_{i=1}^{L} \varphi_{i} \Longrightarrow \forall i\in[L].\ T \models \varphi_{i}.
\end{align*}
\end{claim}
\begin{proof}[Proof of the claim]
The hypothesis means 
\[T \models \lnot \bigvee_{i=1}^{L} \lnot \varphi_{i},\] that is, 
\[br_{0}(T_{\bigvee_{i=1}^{L} \lnot \varphi_{i}}) = br(T_{\bigvee_{i=1}^{L} \lnot \varphi_{i}}).\]
Therefore, for each $b \in br(T_{\bigvee_{i=1}^{L} \lnot \varphi_{i}})$ and $r \in br_{1}(T_{\lnot\varphi_{i}})$, $b \perp r$ holds.
Now, assume some $br_{0}(T_{\varphi_{i}})$ is nonempty, and $r_{0}$ be one of its elements.
Since 
\[\#r_{0} \leq o(n) \ \mbox{and} \ height(T_{\bigvee_{i=1}^{L} \lnot \varphi_{i}}) \leq o(n),\] there exists a branch $b \in br(T_{\bigvee_{i=1}^{L} \lnot \varphi_{i}})$ such that $b||r_{0}$, which is a contradiction.
\end{proof}
Therefore, there exists an instance 
\[ I= UCP^{l,d}_{k}[\psi_{i,j,e}/r_{i,j,e}]\]
in $\pi_{n}$ such that $T \not \models I$.
Hence, $k \not \equiv 0 \pmod d$.
 By restricting the formulae and $T$ by some $\rho \in br_{0}(T_{I})$, we obtain the proof $\pi_{n}^{\rho}$ of $(injPHP^{n+1}_{n})^{\rho}$ and the $o(n)$-evaluation $T^{\rho}$ of $\Gamma_{n}^{\rho}$ (note that $\#\rho \leq o(n)$, hence $T^{\rho}$ is well-defined). 
 Therefore, we may assume that $br_{0}(T_{I})=br(T_{I})$.\\
Let
\begin{align*}
S_{i,j,e} &:= T_{\psi_{i,j,e}},\\
S_{e} &:= T_{\bigvee_{(i,j) \in [l] \times [d]}\psi_{i,j,e}},\\
S_{i,j} &:= T_{\bigvee_{e \in [k]} \psi_{i,j,e}},\\
P_{i} &:= T_{\bigvee_{j \in [d]} \lnot \bigvee_{e \in [k]}\psi_{i,j,e}},\\
N_{i} &:= T_{\bigvee_{j \in [d]} \bigvee_{e \in [k]}\psi_{i,j,e}},\\
U_{i} &:= T_{(\lnot\bigvee_{j \in [d]} \lnot \bigvee_{e \in [k]}\psi_{i,j,e}) \lor (\lnot \bigvee_{j \in [d]} \bigvee_{e \in [k]}\psi_{i,j,e})}.\\
&(i \in [l], j \in [d], e \in [k])
\end{align*}

Since $br_{0}(T_{I})=br(T_{I})$, we observe the following facts:
\begin{enumerate}
 \item\label{etot} For each $e \in [k]$, $T \models \bigvee_{(i,j) \in [l] \times [d]}\psi_{i,j,e}$, that is, every $b \in br(S_{e})$ has $(i,j) \in [l] \times [d]$ and $b' \in br_{1}(S_{i,j,e})$ such that $b' \subseteq b$.
 \item\label{einj} For each $e \in [k]$ and $(i,j) \neq (i^{\prime}, j^{\prime}) \in [l] \times [d]$, every pair of branches $b \in br_{1}(S_{i,j,e})$ and $b^{\prime} \in br_{1}(S_{i^{\prime},j^{\prime},e})$ satisfies $b \perp b^{\prime}$.
 \item\label{ijinj} For each $e \neq e^{\prime} \in [k]$ and $(i,j) \in [l] \times [d]$, each $b \in br_{1}(S_{i,j,e})$ and $b^{\prime} \in br_{1}(S_{i,j,e^{\prime}})$ satisfies $b \perp b^{\prime}$.
 \item\label{iquasitot} For each $i \in [l]$, $T \models (\lnot\bigvee_{j \in [d]} \lnot \bigvee_{e \in [k]}\psi_{i,j,e}) \lor (\lnot \bigvee_{j \in [d]} \bigvee_{e \in [k]}\psi_{i,j,e})$, that is, every $b \in br(U_{i})$ is an extension of some $b^{\prime} \in br_{1}(P_{i}^{c})$ or some $b^{\prime} \in br_{1}(N_{i}^{c})$. In the former case, $b$ is incompatible with every $b^{\prime\prime}\in \bigcup_{j}br_{0}(S_{i,j})$. In the latter case, $b$ is incompatible with every $b^{\prime\prime} \in \bigcup_{j,e}br_{1}(S_{i,j,e})$. Therefore, the two cases are mutually disjoint. Indeed, if $b$ satisfies the both cases, then take $b^{\prime} \in br(S_{i,j})$ such that $b || b^{\prime}$ (which exists since $\#b$ and $height(S_{i,j})$ are both $o(n)$). It follows that $b^{\prime} \in br_{1}(S_{i,j})$, and therefore $b^{\prime}$ is an extension of some $b^{\prime \prime } \in \bigcup_{e}br_{1}(S_{i,j,e})$, which contradicts the observation of the latter case. 
\end{enumerate}
With the observations above, we construct labeled $injPHP$-trees $(X_{i,j})_{(i,j) \in [l] \times [d]}$ and $(Y_{e})_{e \in [k]}$ as follows：
\begin{itemize}
 \item We define $Y_{e}$ for fixed $e$ first. 
 Consider $S_{e}$. 
 By observation \ref{etot} and \ref{einj}, each $b \in br(S_{e})$ has a unique $(i_{b},j_{b}) \in [l] \times [d]$ such that $b$ is an extension of some $b^{\prime} \in br_{1}(S_{i_{b},j_{b},e})$. 
 Consider the tree 
 \begin{align*}
 S_{e}*\sum_{b \in br(S_{e})}(U_{i_{b}}*S_{i_{b},j_{b}})^{b}
 \end{align*}
 (here, we have concatenated the trees, ignoring their labels).\\
 \quad Label each branch extending $b \in br(S_{e})$ with $\langle i_{b}, j_{b},e\rangle$. 
 Let $Y_{e}$ be the resulting labeled $injPHP$-tree. 
 Note that $height(Y_{e})$ is still $o(n)$.
 \item Next, we define $X_{i,j}$ for fixed $(i,j) \in [l]\times[d]$. 
 Consider $U_{i}$. 
 Let $B \subseteq br(U_{i})$ be the set of all $b \in br(U_{i})$ satisfying the former case of observation \ref{iquasitot}. 
 Consider the tree $\widetilde{X}_{i,j} := U_{i}*\sum_{b \in B}S_{i,j}^{b}$. 
 Each branch $\widetilde{b} \in br(\widetilde{X}_{i,j})$ satisfies one of the following:
 \begin{enumerate}
  \item $\widetilde{b} \in br(U_{i}) \setminus B$.
  \item Otherwise, by Lemma \ref{numberofbranches}, we can decompose $\widetilde{b}=bs^{b}$ ($b \in B, s \in br(S_{i,j})$).
  Since $b$ is an extension of some $b^{\prime} \in br_{1}(P_{i}^{c})$, we have $br_{1}(S_{i,j}^{b})=br(S_{i,j}^{b})$.
  It implies $s \in br_{1}(S_{i,j})$.
  Therefore, by observation \ref{ijinj}, each $s^{b} \in br(S_{i,j}^{b})$ has a unique $e_{\widetilde{b}} \in [k]$ such that $s$ extends some $b^{\prime\prime} \in br_{1}(S_{i,j,e_{\widetilde{b}}})$.
 \end{enumerate}
 Let $\widetilde{B}$ be the all branches of $\widetilde{X}_{i,j}$ satisfying the second item.
 We define
 \begin{align*}
 \widehat{X}_{i,j} := \widetilde{X}_{i,j}*\sum_{\widetilde{b} \in \widetilde{B}} (S_{e_{\widetilde{b}}})^{\widetilde{b}}.
 \end{align*}
 We label each branch $b \in br(\widehat{X}_{i,j})$ as follows and define $X_{i,j}$ to be the obtained labeled $injPHP$-tree.
 \begin{enumerate}
  \item If $b \in br(U_{i}) \setminus B$, then label it with the symbol $\perp$.
  \item Otherwise, there exists a unique $\widetilde{b} \in \widetilde{B}$ such that $\widetilde{b} \subseteq b$.
  Label the branch $b$ with $\langle i, j, e_{\widetilde{b}} \rangle$.
 \end{enumerate}
\end{itemize}

By observation \ref{ijinj}, we see that $(X_{i,j})_{i \in [l],j \in [d]}$ and $(Y_{e})_{e \in [k]}$ satisfy the following:
\begin{itemize}
 \item For each $i$，$br_{\perp}(X_{i,1})= \cdots = br_{\perp}(X_{i,d})$.
  \item For each $i,j,e$, $br_{\langle i,j,e \rangle}(X_{i,j})= br_{\langle i,j,e \rangle}(Y_{e})$ (as sets of partial injections). 
\end{itemize}
The second item is justified as follows.
By Lemma \ref{numberofbranches}, Lemma \ref{numberofbranchproducts}, and the definitions of $Y_{e}$ and $X_{i,j}$, the following  are bijections for each $i,j,e$:
\begin{align*}
&\Bigg\{ \left( b,\beta,s \right) \in br(S_{e}) \times br(U_{i}) \times br(S_{ij}) \mid \exists b'\in br_{1}(S_{i,j,e}).(b' \subseteq b),\ \beta || s,\ b||\beta s \Bigg\} \\
&\rightarrow br_{\langle i,j,e \rangle}(Y_{e});\ ( b,\beta,s ) \mapsto b(\beta s).
\end{align*}

\begin{align*}
&\Bigg\{ \left( \beta,s,b \right) \in br(U_{i}) \times br(S_{i,j}) \times br(S_{e}) \mid 
 \begin{array}{l}
\exists \hat{b}\in br_{1}(P_{i}^{c}).(\hat{b} \subseteq \beta),\\
 \exists b'' \in br_{1}(S_{i,j,e}). (b'' \subseteq s),\\
 \beta || s ,\ \beta s || b 
\end{array}
\Bigg\}\\
&\rightarrow br_{\langle i,j,e \rangle}(X_{i,j});\ (\beta,s,b ) \mapsto (\beta s)b.
\end{align*}
Hence, in order to see $br_{\langle i,j,e \rangle}(X_{i,j})=br_{\langle i,j,e \rangle}(Y_{e})$, it suffices to show the following:
let $\beta \in br(U_{i})$, $s \in br(S_{i,j})$, $b \in br(S_{e})$.
If $\beta || s$ and $\beta s || b$, then the following are equivalent:
\begin{enumerate}
 \item\label{Yepart} There exists $b'\in br_{1}(S_{i,j,e})$ such that $b' \subseteq b$.
 \item\label{Xijpart} $\beta$ is an extension of some $\hat{b}\in br_{1}(P_{i}^{c})$, and $s$ is an extension of some $b'' \in br_{1}(S_{i,j,e})$.
\end{enumerate}

We consider (\ref{Yepart}) $\Rightarrow$ (\ref{Xijpart}) first.
Suppose $\beta$ is not an extension of any $\hat{b}\in br_{1}(P_{i}^{c})$.
$\beta \in br(U_{i})$, so it must be an extension of some $r \in br_{1}(N_{i}^{c})=br_{0}(N_{i})$ by observation \ref{iquasitot} above. 
Thus we have $r \perp b'$, and therefore $\beta \perp b$, contradicting the assumption $\beta s || b$.
Hence, $\beta$ is an extension of some $\hat{b}\in br_{1}(P_{i}^{c})=br_{0}(P_{i})$.
Then, by observation \ref{iquasitot} and the assumption $\beta || s$, we have $s \in br_{1}(S_{i,j})$, which means $s$ is an extension of some $b'' \in br_{1}(S_{i,j,e})$.
This finishes the proof of  (\ref{Yepart}) $\Rightarrow$ (\ref{Xijpart}).

Next, we consider the converse.
By observation \ref{etot}, there exist $i',j'$ and $b' \in br(S_{i',j',e})$ such that $b' \subseteq b$.
Then we have $b' || b''$ since $b||s$ follows from $\beta s || b$.
By observation \ref{einj}, $(i,j)=(i',j')$ and $b'=b''$ follows.
Since $b'' \in br_{1}(S_{i,j,e})$ by assumption, we have the converse.

Now, we construct a low-degree $NS$-refutation of $\lnot inj^{*}PHP^{f(n)-\#\dom(\rho)}_{n-\# \ran(\rho)}$.
 By the properties of $(X_{i,j})_{i \in [l], j \in [d]}$ and $(Y_{e})_{e \in [k]}$, the following holds:
\begin{align*}
 \sum_{(i,j) \in [l]\times [d]}\sum_{\alpha \in br(X_{i,j})} x_{\alpha} = \sum_{e \in [k]} \sum_{\beta \in br(Y_{e})} x_{\beta} \pmod d.
\end{align*}
(Here, for a partial injection $\rho$, $x_{\rho} := (\prod_{\langle p,h\rangle  \in \rho_{bij}}x_{p,h})( \prod_{\langle h \rangle \in \rho_{sing}}u_{h})$).

 It is easy to see that for any $injPHP$-tree $T$ over $(D_{M},R_{m})$, 
$\sum_{\alpha \in br(T)}x_{\alpha}=1$ has a $NS$-refutation from $\lnot inj^{*}PHP^{M}_{m}$ with degree $\leq height(T)$.
Hence, we obtain a $NS$-proof of $k=0$ from $\lnot inj^{*}PHP^{f(n)-\#\dom(\rho)}_{n-\#\ran(\rho)}$ over $\ZZ/d\ZZ$ with degree $o(n)$.
Since $k \not \equiv 0 \pmod d$, it is a $NS$-refutation of $\lnot inj^{*}PHP^{f(n)-\#\dom(\rho)}_{n-\#\ran(\rho)}$ over $\ZZ/d\ZZ$ with degree $o(n)$.
However, it contradicts Proposition \ref{injPHPandinj*PHPareequivalent} and Corollary \ref{Razborovlowerboundforfinitering}.

\end{proof}

Setting $f(n):=n+1$ and taking the contrapositive of Theorem \ref{noeval}, we obtain that an analogue of switching lemma for $injPHP$-trees suffices to prove Conjecture \ref{UCPinjPHP}:
\begin{cor}
Assume $F_{c} + UCP^{l,d}_{k} \vdash_{poly(n)} injPHP^{n+1}_{n}$ is witnessed by $AC^{0}$-proofs $(\pi_{n})_{n \geq 1}$.
Suppose there are partial injections $(\rho_{n})_{n \geq 1}$ satisfying
\begin{itemize}
 \item For each $n$, $\rho_{n} \in \mathcal{M}^{n+1}_{n}$.
 \item $n-\#\ran(\rho_{n}) \rightarrow \infty$ $(n \rightarrow \infty)$.
 \item There exist $o(n-\#\ran(\rho_{n}))$-evaluations $(T^{n})_{n\geq 1}$ of $\Gamma_{n}^{\rho}$, where $\Gamma_{n}$ is the all subformulae appearing in $\pi_{n}$.
\end{itemize}
Then, we obtain a contradiction.
\end{cor}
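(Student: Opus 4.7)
The plan is to deduce the contradiction from Theorem~\ref{noeval} applied to the sequence of restricted proofs $(\pi_n^{\rho_n})_{n \geq 1}$. Set $m_n := n - \#\ran(\rho_n)$ and $f(m_n) := n + 1 - \#\dom(\rho_n)$; since $\#\ran(\rho_n) - \#\dom(\rho_n) = \#(\rho_n)_{sing} \geq 0$, we have $f(m_n) > m_n$. Because $m_n \to \infty$ by hypothesis, we may pass to a subsequence on which $m_n$ is strictly increasing, so $f$ can be regarded as a well-defined function $\NN \to \NN$ (extended arbitrarily off the range $\{m_n\}$).

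The first substantive step is to check that $\pi_n^{\rho_n}$ remains an $AC^0$-Frege proof admitting $UCP^{l,d}_{k}$ as an axiom scheme. Each instance $I = UCP^{l,d}_{k}[\psi_{i,j,e}/r_{i,j,e}]$ occurring in $\pi_n$ has its inner formulae $\psi_{i,j,e}$ built from the pigeonhole variables $r_{ij}$, on which $\rho_n$ acts as a partial assignment; hence $I^{\rho_n}$ is equal to $UCP^{l,d}_{k}[\psi_{i,j,e}^{\rho_n}/r_{i,j,e}]$, again an instance of the same scheme. With a natural identification of the remaining pigeonhole variables (mimicking the running example $(injPHP^m_n)^\rho \equiv injPHP^{m-\#\dom(\rho)}_{n-\#\ran(\rho)}$), $\pi_n^{\rho_n}$ becomes an $AC^0$-Frege proof of $injPHP^{f(m_n)}_{m_n}$. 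Since $\Gamma_n^{\rho_n}$ contains every subformula appearing in $\pi_n^{\rho_n}$, the given $o(m_n)$-evaluations $T^n$ serve as $o(m_n)$-evaluations using $injPHP$-trees of the subformulae of $\pi_n^{\rho_n}$. Reindexing the sequence by $m_n$ and invoking Theorem~\ref{noeval} then yields the contradiction.

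The main technical hurdle I anticipate is verifying the polynomial growth hypothesis $f(m) \leq m^{O(1)}$ imposed by Theorem~\ref{noeval}: if $\#\ran(\rho_n)$ is close to $n$, then $f(m_n) = n + 1 - \#\dom(\rho_n)$ might be super-polynomial in $m_n$. I expect this condition to be essentially cosmetic in the argument of Theorem~\ref{noeval}, since the final contradiction is extracted from Razborov's degree lower bound, which is linear in the number of holes and independent of the size of the domain. Should a hidden dependence nonetheless emerge, a harmless workaround is to strengthen the hypothesis of the corollary to require $\#\ran(\rho_n) \leq n/2$, which gives $m_n \geq n/2$ and hence $f(m_n) \leq 2m_n + 1$; in typical applications the $\rho_n$ provided by a restriction argument will already satisfy such a bound.
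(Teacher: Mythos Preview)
Your argument is correct and matches the paper's (one-line) derivation, which simply says ``set $f(n):=n+1$ and take the contraposition of Theorem~\ref{noeval}''; you have correctly spelled out the implicit step of passing to the restricted proofs $\pi_n^{\rho_n}$, observing that $UCP$ instances restrict to $UCP$ instances, and reindexing by $m_n$. Your concern about the hypothesis $f(m)\le m^{O(1)}$ is legitimate but, as you suspected, that hypothesis is never actually invoked in the proof of Theorem~\ref{noeval}: the final contradiction comes from Razborov's linear-in-the-number-of-holes degree lower bound, which is independent of the number of pigeons, so the polynomial bound on $f$ can simply be dropped.
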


Note that an analog of switching lemma would give how to construct such $\rho_{n}$ above.

\begin{rmk}\label{switchingforinjPHPisdifficult}
It will be good if we can prove such an analog for $injPHP$-trees; in that case, we can apply the previous theorem and prove that Conjecture \ref{UCPdoesnotproveinjPHP} is valid.
However, it seems implementing this approach is beyond the current proof techniques.
The difficulty is relevant to that of the famous open problem: does $V^{0} \vdash injPHP^{2n}_{n}$ hold?
\end{rmk}

\section{On the strength of the oddtown theorem}\label{On the strength of the oddtown theorem}
\quad The oddtown theorem is a combinatorial principle stating that there cannot be $n+1$ orthogonal normal vectors in $\FF_{2}^{n}$. In other words, (regarding each $v \in \FF^{n}_{2}$ as the characteristic vector of a subset $S \subseteq [n]$) there cannot be a family $(S_{i})_{i \in [n+1]}$
satisfying the following:
\begin{itemize}
 \item Each $S_{i}$ has an odd cardinality.
 \item Each $S_{i} \cap S_{i^{\prime}}$ ($i < i^{\prime}$) has an even cardinality.
\end{itemize}
Historically, the oddtown theorem and Fisher's inequality (introduced in \S \ref{On the strength of Fisher's inequality}) were proposed as candidates for statements that are easy to prove in the extended Frege system but not in the Frege system (\cite{Hard}).
Standard proofs of them use linear-algebra methods, such as evaluating the number of linearly independent vectors of a given linear space, which do not seem easy to formalize in the bounded arithmetic $VNC^{1}$ (corresponding to polynomial-sized Frege proofs), and this was why they and some other combinatorial principles were presented as above candidates.

Actually, around the time \cite{Hard} was published, a linear-algebra-free proof of Fisher's inequality was already discovered, and I did not know it.
See \S \ref{On the strength of Fisher's inequality} for details and acknowledgment.

On the other hand, upper bounds of linear algebra methods were also given in a series of works.
\cite{Soltys} and \cite{The proof complexity of linear algebra} gave natural bounded arithmetics $\LA$ and $\LAP$ capable of formalizing elementary arguments in linear algebra, formalized the matrix determinant by Berkowitz algorithm \cite{Berkowitz}, and showed the following:
\begin{itemize}
 \item Cayley-Hamilton, co-factor expansion, and an axiomatic definition of determinants (multi-linear, alternating, and $\det(I)=1$ for unit matrices $I$) are equivalent over $\LAP$.
 \item  Multiplicativity of determinants implies all above in $\LAP$.
\end{itemize} 
\cite{Short proofs for the determinant identities} and \cite{Uniform} were recent breakthroughs in the area.

  \cite{Short proofs for the determinant identities} gave a short proof of multiplicativity of determinants of $\FF$-matrices in the proof system $P_{c}(\FF)$, which manipulates arithmetical circuits, for general coefficient fields $\FF$.
  The results and the arguments in \cite{Short proofs for the determinant identities} also imply multiplicativity of determinant for $\ZZ$-matrices (or, equivalently, $\QQ$-rational matrices) has quasipolynomial-sized Frege proofs.
  See the introduction of \cite{Short proofs for the determinant identities} for details.
  
   \cite{Uniform} carefully formalized the argument for $\ZZ$-matrices in $\VNC^{2}$.
Using the result of \cite{Uniform}, we can show that $\VNC^{2}$ proves the oddtown theorem, and therefore propositional translations have quasipolynomial-sized Frege proofs.
See Proposition \ref{VNC^{2}provesoddtown}.

However, there is still room to investigate the precise strengths of the oddtown theorem and Fisher's inequality in terms of bounded reverse mathematics.
In this section, we focus on lower bounds for the oddtown theorem.
We observe that a natural formalization of the oddtown theorem over $V^{0}$ is stronger than several combinatorial principles related to counting.
Then, we try to analyze the relation between the oddtown theorem and $Count^{p}_{n}$ for $p$, which is not a $2$-power.

\begin{defn}
Define the $\Sigma^{B}_{0}$ $\mathcal{L}^{2}_{A}$-formula $oddtown(n,P,Q,R,S)$ as follows:
\begin{align*}
\lnot 
%&\exists n,k,l. \exists S \subset [n+1] \times [n]. 
%\exists P \subset [n+1] \times [n]^{2}. 
%\exists Q \subset [n+1]\times [n]. 
%\exists R \subset [n+1]\times [l] \times [n]^{2}.\\
&[\forall i \in [n+1]. \forall j \in [n]. (S(i,j) \leftrightarrow Q(i,j) \lor \exists e \in [n]^{(2)}. (j \in^{*} e \land P(i,e)) \\
&\land \forall i \in [n+1]. \exists j \in [n]. Q(i,j)\\
&\land \forall i \in [n+1]. \forall j \neq j^{\prime}\in [n].(\lnot Q(i,j) \lor \lnot Q(i,j^{\prime})) \\
&\land \forall i \in [n+1]. \forall j\in [n]. \forall e \in [n]^{(2)}(j \in^{*} e \rightarrow \lnot Q(i,j) \lor \lnot P(i,e)) \\
&\land \forall i \in [n+1]. \forall e \neq e^{\prime} \in [n]^{(2)}(e \cap e^{\prime} \neq \emptyset \rightarrow \lnot P(i,e) \lor \lnot P(i,e^{\prime})) \\
&\land 
\forall i < i^{\prime} \in [n+1]. \forall j \in [n]. (S(i,j) \land S(i^{\prime},j) \leftrightarrow \exists e \in [n]^{(2)} (j \in^{*} e \land R(i,i^{\prime},e)) )\\
&\land \forall i < i^{\prime} \in [n+1]. \forall e\neq e^{\prime} \in [n]^{(2)}. (e \cap e^{\prime} \neq \emptyset \rightarrow \lnot R(i,i^{\prime},e) \lor \lnot R(i,i^{\prime},e^{\prime}))]
\end{align*}
\end{defn}
Intuitively, $S$ above gives sets $S_{i}:=\{ j \in [n] \mid S(i,j)\}$, $P$ gives a $2$-partition of each $S_{i}$ leaving one element, which is specified by $Q$, 
and $R$ gives a $2$-partition of each $S_{i} \cap S_{i^{\prime}}$ ($i < i^{\prime}$).
\begin{defn}
Define the propositional formula $oddtown_{n}$ as follows:
\begin{align*}
&oddtown_{n}:=
\begin{cases}
&1 \quad (n=0)\\
\lnot[ 
&\bigwedge_{i \in [n+1]} \bigwedge_{j \in [n]} (\lnot s_{ij} \lor q_{ij} \lor \bigvee_{e: j \in e \in [n]^{(2)}} p_{ie}) \\
& \land \bigwedge_{i \in [n+1]} \bigwedge_{j \in [n]} (s_{ij} \lor \lnot q_{ij})\\
& \land \bigwedge_{i \in [n+1]} \bigwedge_{j \in [n]}\bigwedge_{e: j \in e \in [n]^{(2)}} (s_{ij} \lor \lnot p_{ie})\\
& \land \bigwedge_{i \in [n+1]} \bigvee_{j \in [n]} q_{ij}\\
&\land \bigwedge_{i \in [n+1]} \bigwedge_{j < j^{\prime} \in [n]} (\lnot q_{ij} \lor \lnot q_{ij^{\prime}}) \\
& \land \bigwedge_{i \in [n+1]} \bigwedge_{j \in [n]} \bigwedge_{e:j \in e \in [n]^{(2)}} (\lnot q_{ij} \lor \lnot p_{ie})\\
&\land \bigwedge_{i \in [n+1]} \bigwedge_{e\perp e^{\prime} \in [n]^{(2)}} (\lnot p_{ie} \lor \lnot p_{ie^{\prime}})\\
& \land \bigwedge_{i<i^{\prime} \in [n+1]} \bigwedge_{j \in [n]} (\lnot s_{ij} \lor \lnot s_{i^{\prime}j} \lor \bigvee_{e: j \in e \in [n]^{(2)}} r_{ii^{\prime}e}) \\
& \land \bigwedge_{i<i^{\prime} \in [n+1]} \bigwedge_{j \in [n]}\bigwedge_{e: j \in e \in [n]^{(2)}}  ( s_{ij} \lor \lnot r_{ii^{\prime}e}) \\
& \land \bigwedge_{i<i^{\prime} \in [n+1]} \bigwedge_{j \in [n]}\bigwedge_{e: j \in e \in [n]^{(2)}}  ( s_{i^{\prime}j} \lor \lnot r_{ii^{\prime}e}) \\
& \land \bigwedge_{i<i^{\prime} \in [n+1]} \bigwedge_{e \perp e^{\prime} \in [n]^{(2)}} (\lnot r_{ii^{\prime}e} \lor \lnot r_{ii^{\prime}e^{\prime}}) ] \quad (n \geq 1)
\end{cases}
\end{align*}
\end{defn}

By a reason similar to that of Convention \ref{excuse}, we abuse the notation and write $oddtown_{n}$ to express $oddtown(n,P,Q,R,S)$, too.

 We first observe an upper bound of the strength of $oddtown_{n}$:
 \begin{proposition}\label{VNC^{2}provesoddtown}[A corollary of \cite{Uniform}]
 \[\VNC^{2} \vdash oddtown(n,P,Q,R,S).\]
 \end{proposition}
 For the definition of $\VNC^{2}$, see \cite{Cook}.
 
 \begin{proof}[Proof Sketch]
  \cite{Uniform} showed that there exists a $\Sigma^{B}_{1}$-definition $\det(A)=N$ of the matrix determinant, where both $A
  $ and $N$ are of set-sort, 
  $N$ is meant to be a binary representation of an integer,
  and $A$ is meant to code a square matrix with integer coefficients, where each coefficient is represented by a binary expression, such that $\VNC^{2}$ proves:
  \begin{itemize}
  \item Let $A$ and $B$ be strings coding square matrices with integer coefficients of the same size $(k \times k)$.
   Then $\det(A)\det(B)=\det(AB)$.
   \item $\det(A)$ can be computed by co-factor expansion concerning any row or column. 
  \end{itemize}
  
  Focusing on the last digit of the integers involved, we can see that there exists a $\Sigma^{B}_{1}$-definition $\det_{\FF_{2}}(A)=b$, where $b \in \{0,1\}$, and $A$ is of set-sort, meant to code a square matrix with $\FF_{2}$-coefficients satisfying the above two items.
  
  Armed with this, we work in $\VNC^{2}$ and show $oddtown_{n}$ as follows.
  Towards a contradiction, suppose $n,P,Q,R,S$ violate $oddtown(n,P,Q,R,S)$.
  Let $M$ be an $\FF_{2}$ matrix of size $(n+1)\times n$ given by 
  \[M_{ij} := \begin{cases}
  1 \quad &(\mbox{if $S(i,j)$ holds})\\
  0 \quad &(\mbox{otherwise})
  \end{cases}.\]
  Since $oddtown(n,P,Q,R,S)$ is violated, we have $MM^{t}=I_{n+1}$ as $\FF_{2}$-matrices, where $I_{n+1}$ is a unit matrix of size $(n+1)\times (n+1)$.
  Note that $\VNC^{2}$ is an extension of $\VTC^{0}$ and can count cardinalities of bounded sets, which is sufficient to prove $MM^{t}=I_{n+1}$.
  Consider the matrix $A$ obtained by padding $M$ by a zero column vector.
  $A$ is of size $(n+1)\times (n+1)$, and $AA^{t}=I_{n+1}$.
  Taking determinants of both sides and applying multiplicativity of determinants, we have $\det_{\FF_{2}}(A)\det_{\FF_{2}}(A^{t})=\det_{\FF_{2}}I_{n+1}$.
  However, $A$ has a zero column vector, and $A^{t}$ has a zero row vector.
  Therefore, by co-factor expansions, we have $\det_{\FF_{2}}(A)=\det_{\FF_{2}}(A^{t})=0$.
  On the other hand, it is elementary to see $\det_{\FF_{2}}(I_{n+1})=1$, a contradiction.
 \end{proof}
 
 \begin{cor}
 There exist $2^{O((\log n)^{2})}$-sized Frege proofs of $oddtown_{n}$.
 \end{cor}
 
Now, we focus on the lower bounds of the strength of $oddtown_{n}$.
 We start with the following simple observations: 
 \begin{proposition}\label{Propoddtown}
\begin{enumerate}
\item\label{oddtowninjPHP} $V^{0} +oddtown_{k} \vdash injPHP^{n+1}_{n}$.
\item\label{oddtownCount2} $V^{0}+oddtown_{k} \vdash Count^{2}_{n}$.
\end{enumerate}
\end{proposition}
\begin{proof}
We first prove \ref{oddtowninjPHP}. Argue in $V^{0}$. We prove the contrapositive. Suppose there exists an injection $f \colon [n+1] \rightarrow [n]$. 
Define 
\[S_{i}:=\{f(i)\}.\] 
Then it violates $oddtown_{n}$.

 We next prove \ref{oddtownCount2}. Argue in $V^{0}$. 
We prove the contrapositive. Suppose $[2n+1]$ is partitioned by 2-sets. Let $R$ be the 2-partition. 
Then setting 
\[S_{i}:=[2n+1] \quad (i \in [2n+2]),\]
 we can violate $oddtown_{2n+2}$ since $[2n+1] \cap [2n+1]$ can be 2-partitioned by $R$ while $[2n+1]\setminus\{2n+1\}$ has a natural $2$-partition.
\end{proof}

\begin{rmk}
Observing the proof above, one may think that we might obtain another interesting formalization of the oddtown theorem imposing $\{S_{i}\}_{i \in [n+1]}$ to be a family of \textit{distinct} sets. Let $oddtown^{\prime}$ be this version.
It turns out that:
\begin{enumerate}
 \item\label{oddtownoddtownprime} $V^{0}+oddtown_{k} \vdash oddtown_{n}^{\prime}$.
 \item\label{oddtownprimeCount2oddtown} $V^{0}+oddtown_{k}^{\prime}+Count^{2}_{k} \vdash oddtown_{n}$.
 \item\label{oddtownprimeCount2} $V^{0}+oddtown_{k}^{\prime} \vdash Count^{2}_{n}$.
\end{enumerate}
Hence, $oddtown_{n}$ and $oddtown^{\prime}_{n}$ have the same strength over $V^{0}$.
\end{rmk}
\begin{proof}[The proof of the remark]
The item \ref{oddtownoddtownprime} is clear.

We show the item \ref{oddtownprimeCount2oddtown}. Work in $V^{0}+oddtown_{k}^{\prime}+Count^{2}_{k}$. 
Assume $\{S_{i}\}_{i=1}^{n+1}$ (where $S_{i} \subseteq [n]$) violates $oddtown_{n}$.
Since each $S_{i} \cap S_{i^{\prime}}$ ($i < i^{\prime}$) is $2$-partitioned, it follows that $S_{i} \neq S_{i^{\prime}}$.
Indeed, if $S_{i}=S_{i^{\prime}}=:S$, then both $S$ and $S\setminus \{s_{0}\}$ ($s_{0} \in S$) is 2-partitioned by the hypothesis.
Consider a straightforward bijection :
\begin{align*}
[2n-1] \cong ([n]\setminus S) \sqcup ([n]\setminus S) \sqcup S \sqcup (S \setminus \{s_{0}\}).
\end{align*}
The right-hand side gives a natural 2-partition using those of $S$ and $S \setminus \{s_{0}\}$, and it induces a $2$-partition of the left-hand side, which violates $Count^{2}_{2n-1}$.\\
\quad Hence, it follows that each $S_{i}$ is distinct.
However, it contradicts $oddtown_{n}^{\prime}$. 
This shows the item \ref{oddtownprimeCount2oddtown}.

Lastly, we show the item \ref{oddtownprimeCount2}.
 Argue in $V^{0}$. 
 Assume $R$ is a $2$-partition of $[2n+1]$.
  Note that $R$ has a natural linear ordering induced by that of whole numbers. 
Define $\{S_{i}\}_{i=1}^{2n+2}$ as follows: it is easy to see that $R$ has at least four elements. Take distinct $r_{1}, r_{2}, r_{3} \in R$. For each $i = 1, \ldots, 2n+1$, take the unique $j$ such that $\{i,j\} \in R$. Then, 
\begin{enumerate} 
\item[Case1.] If $i<j$, set $S_{i}:=[2n+1] \setminus \{i,j\}$.
\item[Case2.] If $i>j$, set $S_{i}:=[2n+1] \setminus (\{i,j\} \cup s_{i})$, where $s_{i}$ is the successor of $\{i,j\}$ in $R$. If there is none (i.e. $\{i,j\} =\max R$), let $s_{i}$ be $\min R$.
\end{enumerate}
Furthermore, we define $S_{2n+2}:=[2n+1] \setminus (r_{1} \cup r_{2} \cup r_{3})$.

Now, we see that $\{S_{i}\}_{i=1}^{2n+2}$ violates $oddtown_{2n+2}$. Indeed, the $S_{i}$ are distinct, we can take natural 2-partitions leaving one element for each $S_{i}$, and we can obtain 2-partitions for each $S_{i} \cap S_{j}$ ($i<j$) removing at most five elements from $R$.
\end{proof}

By theorem \ref{CountpinjPHP} and \ref{injPHPCountp}, we obtain

\begin{cor}
\begin{align*}
 &V^{0} + injPHP^{k+1}_{k} \not \vdash oddtown_{n}, \\
 &V^{0} + Count^{2}_{k} \not \vdash oddtown_{n}.
\end{align*}
\end{cor}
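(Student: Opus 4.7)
The plan is to derive both non-derivability statements by a straightforward transitivity argument, chaining Proposition \ref{Propoddtown} against the two cited independence results. The key observation is that $oddtown_{k}$ stands above both $injPHP^{n+1}_{n}$ and $Count^{2}_{n}$ over $V^{0}$, so an assumption that some weaker principle derives $oddtown_{n}$ would propagate through the chain and contradict one of Theorems \ref{CountpinjPHP} or \ref{injPHPCountp}.

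For the first non-derivation, suppose toward contradiction that $V^{0}+injPHP^{k+1}_{k} \vdash oddtown_{n}$. Combining this with item \ref{oddtownCount2} of Proposition \ref{Propoddtown}, which says $V^{0}+oddtown_{k}\vdash Count^{2}_{n}$, we would obtain $V^{0}+injPHP^{k+1}_{k}\vdash Count^{2}_{n}$ by cut. But this contradicts Theorem \ref{injPHPCountp} in the case $p=2$. For the second non-derivation, suppose $V^{0}+Count^{2}_{k}\vdash oddtown_{n}$; combining with item \ref{oddtowninjPHP} of Proposition \ref{Propoddtown}, which says $V^{0}+oddtown_{k}\vdash injPHP^{n+1}_{n}$, we would obtain $V^{0}+Count^{2}_{k}\vdash injPHP^{n+1}_{n}$, contradicting Theorem \ref{CountpinjPHP} in the case $p=2$.

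The only minor point to verify is that these cuts respect the convention on free parameters spelled out earlier (distinct variables $k$ versus $n$ for the axiom scheme and the target statement), but since both Proposition \ref{Propoddtown} and the hypothesized derivations are stated uniformly in their parameters, we can relabel variables freely and apply substitution in $V^{0}$. There is no real obstacle here; the whole content of the corollary is the transitivity step, and the substantive work has already been done in Proposition \ref{Propoddtown} and in the cited theorems.
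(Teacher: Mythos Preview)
Your proof is correct and matches the paper's approach exactly: the paper simply states that the corollary follows from Theorems \ref{CountpinjPHP} and \ref{injPHPCountp} (implicitly via Proposition \ref{Propoddtown}), and your transitivity argument is precisely the intended unpacking of that one-line justification.
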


This raises the following natural problems:
\begin{question}
\begin{enumerate}
 \item $V^{0}+ injPHP^{k+1}_{k} + Count^{2}_{k} \vdash oddtown_{n}$? How about $V^{0}+ GCP \vdash oddtown_{n}$?
 \item\label{oddtownCountp} $V^{0}+oddtown_{k} \vdash Count^{p}_{n}$ for which $p$?
\end{enumerate}
\end{question}
The author cannot answer these questions for now. 
However, we tackle the item \ref{oddtownCountp} in the rest of this section.

By Proposition \ref{Propoddtown} and Theorem \ref{charcount}, it is easy to see:
\begin{cor}\label{oddtownCount2power}
 If $p$ is a power of $2$, $V^{0} + oddtown_{k} \vdash Count^{p}_{n}$.
\end{cor}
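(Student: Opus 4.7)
The plan is to combine two results already in place. By Proposition \ref{Propoddtown} item \ref{oddtownCount2}, we know that $V^{0} + oddtown_{k} \vdash Count^{2}_{n}$, so in particular the theory $V^{0} + oddtown_{k}$ proves the universal closure of $Count^{2}(k,X)$. Hence everything provable from $V^{0} + Count^{2}_{k}$ is already provable from $V^{0} + oddtown_{k}$.

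Now apply Theorem \ref{charcount} with the source modulus $2$ and target modulus $p = 2^{l}$. The characterization says $V^{0} + Count^{2}_{k} \vdash Count^{p}_{n}$ if and only if there exists $N \in \NN$ with $p \mid 2^{N}$; taking $N = l$ gives $p = 2^{l} \mid 2^{l}$, so the hypothesis is met and we obtain $V^{0} + Count^{2}_{k} \vdash Count^{p}_{n}$.

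Chaining these two derivations yields $V^{0} + oddtown_{k} \vdash Count^{p}_{n}$ whenever $p$ is a power of $2$, which is the desired corollary. There is no real obstacle: the only thing to check is that the transitivity step is legitimate, i.e.\ that ``$V^{0} + oddtown_{k} \vdash Count^{2}_{n}$'' really means the universal closure of $Count^{2}$ is derivable, so that it can be used as an axiom in the second derivation; this is exactly the convention set out after Theorem \ref{Ajtai}.
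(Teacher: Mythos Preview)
Your proof is correct and follows exactly the approach the paper indicates: the corollary is stated immediately after the sentence ``From Proposition \ref{Propoddtown} and Theorem \ref{charcount}, it is easy to see,'' and your argument is precisely the intended combination of Proposition \ref{Propoddtown}(\ref{oddtownCount2}) with the forward direction of Theorem \ref{charcount} for $q=p=2^{l}$ and source modulus $2$.
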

The author conjectures that the converse of this corollary holds. Furthermore, the author conjectures the following:

\begin{conj}\label{conjoddtownCount}
For each $d \in \NN$ and a prime $p \neq 2$, 
\[F_{d}+oddtown_{k} \not \vdash_{poly(n)} Count^{p}_{n}.\]
\end{conj}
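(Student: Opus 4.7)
The plan is to adapt the entire machinery of Theorem \ref{noeval} to the present setting, with three main ingredients that need to be reworked: (i) the analogue of $injPHP$-trees suited to $Count^{p}_{n}$, (ii) a switching-style construction of $o(n)$-evaluations, and (iii) the extraction, from an unsatisfied $oddtown_{k}$-axiom, of an $\FF_{2}$-Nullstellensatz derivation from $\lnot Count^{p}$.

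First I would introduce $Count^{p}$-trees over $[n]$: decision trees whose internal nodes ask either ``which $p$-set $e \in [n]^{p}$ containing a specified vertex $j$ is in the partition'' or ``what is the $p$-set beginning at the smallest yet-unassigned index'', with edges labelled by partial $p$-matchings. Branches of length $h$ correspond to partial $p$-matchings of $ph$ elements. One then copies, with the obvious modifications, the definitions of restriction $T^{\rho}$, concatenation $T \ast U$, and $k$-evaluation from Section \ref{On Question UCPinjPHP}: the atomic variable $r_{e}$ is mapped to the depth-$1$ tree that queries whether $e$ is chosen, and the map $\varphi \mapsto T_{\varphi}$ commutes with $\lnot$ and $\bigvee$ exactly as before. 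Granting the switching-lemma analogue, a $poly(n)$-size $F_{d}$-proof $\pi_{n}$ of $Count^{p}_{n}$ from $oddtown_{k}$ would, after restricting by a suitable partial matching $\rho$ of sublinear size, admit an $o(n)$-evaluation $T$ of all its subformulae on $(Count^{p}_{n})^{\rho}$.

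Next, arguing as in the proof of Theorem \ref{noeval}: since $T$ cannot satisfy the (restricted) $Count^{p}$-endformula while it propagates through every Frege rule, there must exist an $oddtown_{k}$-instance $I = oddtown_{k^{\prime}}[\psi_{\cdot}/r_{\cdot}]$ with $T \not\models I$; a further restriction by some branch $\rho^{\prime} \in br_{0}(T_{I})$ lets me assume that every branch of $T_{I}$ is labelled $0$. Unpacking the trees assigned to the subformulae of this $oddtown$-instance, one obtains, for each witnessed set $S_{i}$ and each intersection $S_{i} \cap S_{i^{\prime}}$, $\{0,1\}$-labelled $Count^{p}$-trees whose branches encode the parity structure demanded by oddtown. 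The key arithmetization step is to assign to each branch $\alpha$ a monomial $x_{\alpha}$ in $\FF_{2}$-variables coding the partial $p$-matching, so that the defining identities of $oddtown$ translate, modulo $2$, into polynomial identities. Since the trees have height $o(n)$, the branch-sum identities $\sum_{\alpha} x_{\alpha} = 1$ admit NS-derivations from the polynomial axioms $\lnot Count^{p}$ of degree $o(n)$; assembling them yields an $\FF_{2}$-Nullstellensatz refutation of the $n^{O(1)}$-many oddtown-polynomials from $\lnot Count^{p}_{n-\#\ran(\rho)-\#\ran(\rho^{\prime})}$ of degree $o(n)$. Tightening the parameters via the restriction sizes recovers the sufficient condition announced in the abstract: a vector of $n^{O(1)}$ many $\FF_{2}$-polynomials violating oddtown and derivable from $\lnot Count^{p}_{n^{\epsilon}}$ by an NS-proof of degree $\leq O(\log n)$.

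The main obstacle is twofold. The first, already flagged after Theorem \ref{noeval}, is the switching step itself: proving that under suitable random restrictions the subformulae of $\pi_{n}$ genuinely admit $o(n)$-evaluations by $Count^{p}$-trees appears to lie beyond current techniques, and for this reason I would state the entire argument as a sufficient condition rather than attempt to close it unconditionally. The second and conceptually deeper obstacle is the final degree lower bound that the reduction reduces to. Corollary \ref{oddtownCount2power} shows that no such lower bound can hold when $p$ is a power of $2$, and Theorem \ref{charcount} signals that the $\FF_{2}$-linear structure of $oddtown$ is genuinely orthogonal to the mod-$p$ structure of $Count^{p}$ precisely when $p$ is not a power of $2$. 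One would hope to derive the needed lower bound by combining the field-independent $PC$-degree technology of \cite{Razborov} with the mod-$p$ versus mod-$q$ methods of \cite{countvsphp} and \cite{More}, but obtaining a uniform lower bound against arbitrary oddtown-type target polynomials seems to require new ideas; this is precisely the gap that Conjecture \ref{conjoddtownCount} isolates.
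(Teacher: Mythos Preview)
The statement is a \emph{conjecture}; the paper does not prove it but only establishes the sufficient condition of Theorem \ref{OnF2}, leaving the final $\FF_{2}$-Nullstellensatz lower bound open. Your proposal is in the same spirit: you rederive essentially that sufficient condition and then isolate what remains. In that sense you and the paper agree on the overall architecture.

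There is, however, a genuine error in your assessment of the obstacles. Your ``first obstacle''---the switching lemma for $Count^{p}$-trees---is \emph{not} open: it is classical (see section 15.5 of \cite{proofcomplexity}, and the original sources \cite{countvsphp}, \cite{More}), and the paper invokes it directly in the proof of Theorem \ref{OnF2} to produce an $O(\log n)$-evaluation after restricting to $n^{\epsilon}$ elements. The remark following Theorem \ref{noeval} that you appeal to concerns the entirely different setting of $injPHP$-trees, where the difficulty is tied to the open status of weak $injPHP^{2n}_{n}$; no such obstruction is present for partial $p$-matchings. So the reduction to the algebraic sufficient condition goes through unconditionally, exactly as the paper carries it out, and your hedging (``I would state the entire argument as a sufficient condition rather than attempt to close it'') is misplaced at this step. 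Since the switching lemma already gives $O(\log n)$-height trees over an $n^{\epsilon}$-sized universe, you also do not need a separate ``tightening'' step to pass from $o(n)$ to $O(\log n)$ degree.

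The only genuine gap, then, is your second obstacle: ruling out the existence of $n^{O(1)}$ many $\FF_{2}$-polynomials $f_{ij}$ together with degree-$O(\log n)$ NS derivations of $\sum_{j} f_{ij}=1$ and $\sum_{j} f_{ij}f_{i'j}=0$ from $\lnot Count^{p}_{n^{\epsilon}}$. The paper leaves this open as well; your pointers to \cite{Razborov}, \cite{countvsphp}, \cite{More} are reasonable, but as you note they do not close it.
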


Using Theorem \ref{charcount}, it is easy to see that Conjecture \ref{conjoddtownCount} implies the converse of Corollary \ref{oddtownCount2power}.
We give a sufficient condition to prove Conjecture \ref{conjoddtownCount}:

\begin{theorem}\label{OnF2}
Let $p \in \NN$ be a prime other than $2$. 
  Suppose 
  \[F_{d}+oddtown_{k} \vdash_{poly(n)} Count^{p}_{n}.\] 
  Then, for any large enough $n \not \equiv 0 \pmod p$, there exist $m =n^{O(1)}$ and a family $(f_{ij})_{i \in [m+1], j \in [m]}$ of polynomials over $\FF_{2}$ such that the following equalities have $NS$-proofs over $\FF_{2}$ from $\lnot Count^{p}_{n}$ with degree $O(1)$:
  \begin{align*}
   &\sum_{j \in [m]}f_{ij} =1 \quad & (i \in [m+1])\\
   &\sum_{j \in [m]} f_{ij}f_{i^{\prime}j}=0 \quad &(i \neq i^{\prime} \in [m+1])
  \end{align*}

Here, $\lnot Count^{p}_{n}$ (where $n \not \equiv 0 \pmod p$) means the following system of polynomials:
 \begin{align*}
  &\sum_{e : j \in e \in [n]^{(p)}} x_{e}-1,\ x_{e}x_{e^{\prime}},\ x_{e}^{2}-x_{e}\\
   &(j \in [n] \ \& \ e, e^{\prime} \in [n]^{(p)} \ \& \ e \perp e^{\prime})
 \end{align*}
\end{theorem}
Hence, if we can prove a non-constant degree lower bound for $NS$-proofs for the above system of equations, then Conjecture \ref{conjoddtownCount} is true.

Before the proof, we present a strategy for obtaining the constant degree bound above.
We assume the basics of \textit{partial $p$-partitions}, \textit{$p$-trees}, and \textit{$k$-evaluations using $p$-trees}.
See section 15.5 in \cite{proofcomplexity} for references.
We also use the notations $br(T)$, $T \models \varphi$, $T*\sum_{b \in S} T_{b}$, etc. as the straightforward analogous meanings to the ones given in \S \ref{On Question UCPinjPHP}.

The power of the notion comes from the following so-called switching lemma:
\begin{theorem}[Lemma 12.3.11 in \cite{Krajicek}. Essentially by \cite{remake}, \cite{remake2}.]\label{switchinglemmaforptrees}
Let $p \geq 2$.
Then there exists a constant $c$ satisfying the following:
let $H_{1},\ldots, H_{N}$ be families of partial $p$-partitions of $[pn+r]$, where $0 \leq r<p$.
Assume that $\#H_{i} \leq t \leq s$ for every $i \in [N]$, and 
\begin{align}
\frac{w^{s}}{[t\cdot (n-w)]^{c \cdot s}} > N, \label{assumptionforswitching}
\end{align}
where $w \leq n$.
Then there is a partial $p$-partition $\rho$ with $\# \rho = w$, such that for every $i \in [N]$ there exists a $p$-tree with height at most $p\cdot s$ representing $H_{i}^{\rho}$.
\end{theorem}

\begin{rmk}
In the book \cite{Krajicek}, $r=1$ is assumed, but the proof is straightforwardly extended to treat general $r < p$ since $p$ is a fixed constant here.
\end{rmk}

\begin{rmk}
 In the book \cite{Krajicek}, the notion \textit{$k$-complete system} is used instead of $p$-trees, and therefore the precise definition of $k$-evaluation in it is different from the one given in \cite{proofcomplexity}.
 However, the proof of Lemma 12.3.11 in \cite{Krajicek} essentially constructs canonical $p$-trees $T_{i}$ representing $H_{i}$'s under ``good'' restrictions, and $br(T_{i})$'s are the $k$-complete systems argued in \cite{Krajicek}.
 In other words, an inequality corresponding to (\ref{assumptionforswitching}) can be extracted from the proof of Theorem 15.2.2 in \cite{proofcomplexity}.
 The intertwin between several terminologies and notions is concisely mentioned in \S 4.4 of \cite{partiallydefinableforcing}.
\end{rmk}

In particular, when $N=n^{O(1)}$, choosing a parameter $0<\epsilon<1/c$ and putting $w:=n-n^{\epsilon}$, we can satisfy the inequality (\ref{assumptionforswitching}) by choosing sufficiently large $t=s=O(1)$.
Applying Theorem \ref{switchinglemmaforptrees} constantly many times, we obtain the following:

\begin{cor}\label{constructionofO(1)-evaluation}[A counterpart of Theorem 12.4.3 in \cite{Krajicek} when $\# \Gamma$ is just $n^{O(1)}$]
Let $e,d \geq 1$. 
Then, for sufficiently small $\epsilon >0$, there exists a constant $k>0$ satisfying the following: 
\begin{itemize}
 \item For sufficiently large $n$, if $\Gamma$ is a set of formulae of depth $\leq d$ whose variables are those used in $Count^{p}_{pn+r}$ ($0 \leq r < p$), closed under the subformulae, and with size $\#\Gamma \leq n^{e}$, then there is a partial $p$-partition $\rho$ with size $n-n^{\epsilon}$ such that there exists a $k$-evaluation of $\Gamma^{\rho}$.  
 \end{itemize}
 Here, $n^{\epsilon}$ is rounded to an integer in an appropriate way.

\end{cor}

Now, we move on to the proof of Theorem \ref{OnF2}.
\begin{proof}[Proof of Theorem \ref{OnF2}]
In this proof, we assume $p=3$ for readability.
Let proofs $(\pi_{l})_{l \in \NN}$ witness 
\begin{align*}
F_{d}+oddtown_{k} \vdash_{poly(l)} Count^{3}_{l}.
\end{align*}
We focus on $l \not\equiv 0 \pmod 3$.
Let $l=3n+r$ ($0 < r < 3$).
Let $\Gamma_{l}$ be the all subformulae appearing in $\pi_{l}$.
Apply Corollary \ref{constructionofO(1)-evaluation} for $3$-trees, and obtain $\epsilon > 0$ and a partial $3$-partition $\rho$ with size $n^{\epsilon}$ such that there exists an $O(1)$-evaluation $T$ of $\Gamma_{l}^{\rho}$, where $l$ is sufficiently large.
Note that $(Count^{3}_{l})^{\rho}$ can be identified as $Count^{3}_{3n^{\epsilon}+r}$.

It suffices to construct $(f_{ij})$ in the claim for these sufficiently large $3n^{\epsilon}+r$ instead of general $l$.
Indeed, given sufficiently large integer $N \not \equiv 0 \pmod 3$, take an integer $n$ satisfying 
\[3n^{\epsilon}+r < N \leq 3(n+1)^{\epsilon}+r,\]
where $N \equiv r \pmod 3$ and $0<r<3$.
If $(f_{ij})_{i \in [m+1], j \in [m]}$ is as claimed for $3(n+1)^{\epsilon}+r$, then applying a partial restriction given by a partial $3$-partition leaving elements in $[N]$ in the universe $[3(n+1)^{\epsilon}+r]$, we get $(f_{ij})_{i \in [m+1], j \in [m]}$ for $N$.
Since 
\[m=(3(n+1)^{\epsilon}+r)^{O(1)}=(3n^{\epsilon}+r)^{O(1)}=N^{O(1)},\]
 the claim for general $N$ follows.

Now, we focus on constructing $(f_{ij})$ in the claim for sufficiently large $3n^{\epsilon}+r$.
We renew the expression $3n^{\epsilon}+r$ and write it as (new) $n$ from now on for readability, refreshing our mind.

Recall that the evaluation $T$ is sound for Frege rules in $\pi_{n}$, and it satisfies $T \not \models Count^{3}_{n}$ at the same time.
It follows that some instance
\begin{align*}
 I:=oddtown_{m}[\sigma_{ij}/s_{ij}, \tau_{ij}/q_{ij}, \varphi_{ie}/p_{ie}, \psi_{ii^{\prime}e}/r_{ii^{\prime}e}]
\end{align*}
in $\pi_{n}$ satisfies $T \not \models I$.
 Restricting $O(1)$-elements more if necessary, we may assume that actually $br_{0}(T_{I}) = br(T_{I})$ holds.
 
 Clearly, $m = n^{O(1)}$.
For $i \in [m+1]$ and $j \in [m]$,
\begin{align*}
f_{ij} := \sum_{b \in br_{1}(T_{\sigma_{ij}})}\prod_{e \in b} x_{e}.
\end{align*}
We prove that these polynomials satisfy the required properties.

First, fix $i<i^{\prime} \in [m+1]$.
 We construct a $NS$-proof of $\sum_{j=1}^{m}f_{ij}f_{i^{\prime}j}=0$ over $\FF_{2}$ from the system $\lnot Count^{3}_{n}$．
For each $j \in [m]$, construct $U^{1}_{j}$ ($j \in [m]$) as follows:
 \begin{align*}
 U^{1}_{j}:=T_{\sigma_{ij}}*\sum_{b \in br_{1}(T_{\sigma_{ij}})} (T_{\sigma_{i^{\prime}j}}*\sum_{b^{\prime} \in br_{1}(T_{\sigma_{i^{\prime}j}})} (T_{i,i^{\prime},j})^{b^{\prime}})^{b},
\end{align*}
where
\begin{align*}
T_{i,i^{\prime},j}:=
T_{\lnot \sigma_{ij} \lor \lnot \sigma_{i^{\prime}j} \lor \bigvee_{e: j \in e \in [m]^{(2)}} \psi_{ii^{\prime}e}}.
\end{align*}
Now, define $U_{j}$ ($j \in [m]$) as follows:
\begin{itemize}
 \item Consider each branch $r\in br(U^{1}_{j})$ of the form $r=b(b^{\prime})^{b}d^{bb^{\prime}}$, where
 \begin{align*} 
 b \in br_{1}(T_{\sigma_{ij}}), b^{\prime} \in br_{1}(T_{\sigma_{i^{\prime}j}}), d\in br( T_{i,i^{\prime},j}).
 \end{align*}
 Let $S^{1}_{j}$ be the set of all branches of $U^{1}_{j}$ of the above form.
 Since $d || bb^{\prime}$ and
 \begin{align*}
 &T \models \lnot \sigma_{ij} \lor \lnot \sigma_{i^{\prime}j} \lor \bigvee_{e: j \in e \in [m]^{(2)}} \psi_{ii^{\prime}e}, \\
 &T \models \lnot \psi_{ii^{\prime}e} \lor \lnot \psi_{ii^{\prime}e^{\prime}} \quad (\forall e^{\prime} \perp e)
 \end{align*}
There is a unique $e_{r} \in [m]^{(2)}$ such that 
 \begin{itemize}
  \item $j \in e_{r}$, and
  \item $d$ is an extension of some $d^{\prime} \in br_{1}(T_{\psi_{ii^{\prime}e_{r}}})$.  
 \end{itemize}
 \item Let $j^{\prime}_{r,j}$ be the element of $e_{r}$ other than $j$ (that is, $e_{r}=\{j, j^{\prime}_{r,j}\}$).
 \item We define $U^{2}_{j} := U^{1}_{j} * \sum_{r \in S^{1}_{j}} (T_{\sigma_{ij^{\prime}_{r,j}}\lor \lnot \psi_{ii^{\prime}e_{r}}}*T_{\sigma_{i^{\prime}j^{\prime}_{r,j}}\lor \lnot \psi_{ii^{\prime}e_{r}}})^{r}$.
 \item Let $S^{2}_{j} \subseteq br(U^{2}_{j})$ be the set of all branches extending some element of $S^{1}_{j}$.
 \item For each 
 \begin{align*}
 u=rs^{r} \in S^{2}_{j} \quad (r \in S^{1}_{j}, s \in br (T_{\sigma_{ij^{\prime}_{r,j}}\lor \lnot \psi_{ii^{\prime}e_{r}}}*T_{\sigma_{i^{\prime}j^{\prime}_{r,j}}\lor \lnot \psi_{ii^{\prime}e_{r}}})),
 \end{align*}
  Let $J^{\prime}_{u,j} := j^{\prime}_{r,j}$.
 \item We set $U_{j}:=U^{2}_{j}*\sum_{u \in S^{2}_{j}} (U^{2}_{J^{\prime}_{u,j}})^{u}$. 
\end{itemize}
\quad For each $j \in [m]$, let $B_{j} \subseteq br(U_{j})$ be the set of branches extending some element of $S^{2}_{j}$.
Under $\lnot Count^{3}_{n}$, we see the equation 
\begin{align*}
\sum_{\alpha \in B_{j}} x_{\alpha}=f_{ij}f_{i^{\prime}j}
\end{align*}
 has a $NS$-proof over $\FF_{2}$ with degree $O(1)$.
Therefore, we obtain a $NS$-proof of 
\begin{align*}
 \sum_{j \in [m]} \sum_{\alpha \in B_{j}}x_{\alpha} = \sum_{j \in [m]} f_{ij}f_{i^{\prime}j}
\end{align*}
 over $\FF_{2}$ with degree $O(1)$.
The LHS is $0$ based on the following reasoning:
Let $b=uv^{u} \in B_{j}$, where $u \in S^{2}_{j}$ and $v \in br(U^{2}_{J^{\prime}_{u,j}})$. 
Let $l:=J^{\prime}_{u,j}$.
It is easy to see that $v \in S^{2}_{l}$, and $b \in B_{l}$.
Decomposing $b=cd^{c}$, where $c \in S_{l}$ and $d \in S_{J^{\prime}_{c,l}}$, we obtain $J^{\prime}_{c,l}=j$.
This argument gives a natural $2$-partition of the disjoint union $\bigsqcup_{j \in [m]} B_{j}$, pairing branches giving the same partial injections.
\\
 \quad Lastly, we prove that $\sum_{j=1}^{m} f_{ij}=1$ ($i \in [m+1]$) has a $NS$-proof from $\lnot Count^{3}_{n}$ with degree $O(1)$. 
 For $j \in [m]$, let
 \begin{align*}
  V_{j}:= T_{\sigma_{ij}}*\sum_{b \in br_{1}(T_{\sigma_{ij}})} (T_{i,j})^{b}.
 \end{align*}
 where $T_{i,j}:=T_{\lnot \sigma_{ij} \lor \tau_{ij} \lor \bigvee_{e: j \in e \in [n]^{(2)}} \varphi_{ie}}$.\\
\quad Let $B_{j}$ be the set of branches $b \in br(V_{j})$ extending some $b^{\prime} \in br_{1}(T_{\sigma_{ij}})$.
Since 
\begin{align*}
&T \models \lnot \tau_{ij} \lor \lnot \varphi_{ie} \quad (j \in e \in [n]^{(2)})\quad \mbox{and}\\
&T \models \lnot \varphi_{ie} \lor \lnot \varphi_{ie^{\prime}} \quad (e \perp e^{\prime}),
\end{align*}
 each $b \in B_{j}$ satisfies exactly one of the following:
 \begin{enumerate}
  \item\label{single} $b$ is an extension of some $b^{\prime} \in br_{1}(T_{\tau_{ij}})$.
  \item\label{double} There exists a unique $e \in [m]^{(2)}$ ($j \in e$) such that $b$ is an extension of some $b^{\prime} \in br_{1}(T_{\varphi_{ie}})$.
 \end{enumerate}
 Define $R_{b}$ as follows:
 \begin{enumerate}
  \item If the condition \ref{single} is satisfied, 
  \[R_{b}:= T_{i}*T_{\sigma_{ij} \lor \lnot \tau_{ij}},\]
   where $T_{i} := T_{\bigvee_{j}\tau_{ij}}$.
  \item If the condition \ref{double} is satisfied, and $\{j,j^{\prime}\}:=e$,
   then 
  \[R_{b}:=T_{\sigma_{ij^{\prime}} \lor \lnot \varphi_{ie}}*(V_{j^{\prime}}*T_{\sigma_{ij} \lor \lnot \varphi_{ie}}).\]
 \end{enumerate}
 Consider 
 \[W_{j}:=V_{j}*\sum_{b \in B_{j}}(R_{b})^{b}.\]
  Let $C_{j}$ be the set of all branches $r \in br(W_{j})$ of the form $r=bd^{b}$, where $b \in B_{j}$.
 The equation 
 \begin{align*}
 \sum_{r \in C_{j}}x_{r}=f_{ij}
 \end{align*}
  has a $NS$-proof over $\FF_{2}$ from $\lnot Count^{3}_{n}$ with degree $O(1)$.\\
\quad Now, we define $Q_{i}$ as follows: 
since
\begin{align*}
T \models \bigvee_{j} \tau_{ij}\quad \mbox{and}\quad  T \models \lnot \tau_{ij} \lor \lnot \tau_{ij^{\prime}},
\end{align*}
we see each branch $b \in br(T_{i})$ has the unique $j_{b}$ such that $b$ is an extension of $b^{\prime} \in br_{1}(T_{\tau_{ij_{b}}})$.
We set
\begin{align*}
Q_{i}:= T_{i}*\sum_{b \in br(T_{i})}(T_{\sigma_{ij_{b}} \lor \lnot \tau_{ij_{b}}} *(T_{\sigma_{ij_{b}}} * T_{i,j_{b}}))^{b}.
\end{align*}
 \quad We already know that there exists a $NS$-proof of $\sum_{\beta \in br(Q_{i})} x_{\beta}=1$ from $\lnot Count^{3}_{n}$ with degree $O(1)$.
 Observing
 \begin{align}\label{modout}
  \sum_{j} \sum_{r \in C_{j}} x_{r}+ \sum_{\beta \in br(Q_{i})} x_{\beta} = 0,
 \end{align}
 we obtain a $NS$-proof of $\sum_{j=1}^{m} f_{ij}=1$ from $\lnot Count^{3}_{n}$ with degree $O(1)$.\\
\quad The observation follows from a $2$-partition of $(\bigsqcup_{j}C_{j}) \sqcup br(Q_{i})$ similar to the one appearing in the proof of item 3. To be concrete, consider the following labeling of $r = bd^{b} \in C_{j}$ (where $b \in B_{j}$):
\begin{itemize}
 \item If $b$ satisfies the condition \ref{single} in the definition of $W_{j}$, label it by $\{ j \}$.
 \item If $b$ satisfies the condition \ref{double} in the definition of $W_{j}$, and $\{j,j^{\prime}\}:=e$, label $b$ by $e$.
\end{itemize}
To make $W_{j}$ fully labeled, we label each $b \in br(W_{j}) \setminus C_{j}$ by a symbol $\perp$.
Then we obtain:
\begin{itemize}
 \item For each $j \neq j^{\prime}$, $br_{\{  j,j^{\prime} \}} (W_{j}) = br_{\{ j,j^{\prime} \}} (W_{j^{\prime}})$.
 \item $\bigsqcup_{j \in [m]} br_{\{ j \}} (W_{j}) = br(Q_{i})$.
\end{itemize}
These give the equation (\ref{modout}).
\end{proof}

\section{On the strength of Fisher's inequality}\label{On the strength of Fisher's inequality}
\quad When we discuss whether the condition given in Theorem \ref{OnF2} actually holds or not, it is natural also to consider the $\KK$-analogue of the condition, where $\KK$ is an arbitrary field other than $\FF_{2}$.
The following combinatorial principle (see Remark \ref{intuition} for the informal meaning) relates to a condition similar to the analog.

\begin{defn}
We define the $\Sigma^{B}_{0}$ $\mathcal{L}^{2}_{A}$ formula $FIE(n,S,R)$ as follows:
\begin{align*}
&FIE(n,S,R) := \\
&\lnot \Bigg(
 \forall i \in [n+1] \exists j \in [n] S(i,j)\\ 
 & \land \forall i_{1}< i_{2} \in [n+1] \exists j \in [n] 
 ((S(i_{1},j) \land \lnot S(i_{2},j)) \lor (\lnot S(i_{1},j) \land S(i_{2},j) ))\\
 &\land \forall i_{1} < i_{2} \in [n+1] \forall i_{1}^{\prime} < i_{2}^{\prime} \in [n+1] \forall j \in [n]\\
  &\quad \quad (\lnot S(i_{1},j) \lor \lnot S(i_{2},j) \lor \exists j^{\prime} \in [n] R(i_{1},i_{2},i_{1}^{\prime}, i_{2}^{\prime},j,j^{\prime}) ) \\
 &\land\forall i_{1}<i_{2} \in [n+1] \forall i_{1}^{\prime} < i_{2}^{\prime} \in [n+1] \forall j^{\prime} \in [n] \\
 &\quad \quad (\lnot S(i_{1}^{\prime},j) \lor \lnot S(i_{2}^{\prime},j) \lor \exists j \in [n] R(i_{1},i_{2},i_{1}^{\prime}, i_{2}^{\prime},j,j^{\prime})) \\
 &\land \forall i_{1}<i_{2} \in [n+1] \forall i_{1}^{\prime} < i_{2}^{\prime} \in [n+1] \forall j, j^{\prime} \in [n] (\lnot R(i_{1},i_{2},i_{1}^{\prime}, i_{2}^{\prime},j,j^{\prime}) \lor S(i_{1},j) )\\
 &\land \forall i_{1}<i_{2} \in [n+1] \forall i_{1}^{\prime} < i_{2}^{\prime} \in [n+1] \forall j, j^{\prime} \in [n] (\lnot R(i_{1},i_{2},i_{1}^{\prime}, i_{2}^{\prime},j,j^{\prime}) \lor S(i_{2},j))\\
  &\land \forall i_{1}<i_{2} \in [n+1] \forall i_{1}^{\prime} < i_{2}^{\prime} \in [n+1] \forall j, j^{\prime} \in [n]( \lnot R(i_{1},i_{2},i_{1}^{\prime}, i_{2}^{\prime}, j,j^{\prime}) \lor S(i_{1}^{\prime},j^{\prime}) )\\
  &\land \forall i_{1}<i_{2} \in [n+1] \forall i_{1}^{\prime} < i_{2}^{\prime} \in [n+1] \forall j, j^{\prime} \in [n]( \lnot R(i_{1},i_{2},i_{1}^{\prime}, i_{2}^{\prime},j,j^{\prime}) \lor S(i_{2}^{\prime},j^{\prime})) \\
  &\land \forall i_{1}<i_{2} \in [n+1] \forall i_{1}^{\prime} < i_{2}^{\prime} \in [n+1] \forall j \in [n]\forall j^{\prime} \neq  j^{\prime\prime} \in [n]\\
  &\quad \quad  (\lnot R(i_{1},i_{2},i_{1}^{\prime}, i_{2}^{\prime},j,j^{\prime}) \lor \lnot R(i_{1},i_{2},i_{1}^{\prime}, i_{2}^{\prime},j,j^{\prime\prime}) )\\
  &\land \forall i_{1}<i_{2} \in [n+1] \forall i_{1}^{\prime} < i_{2}^{\prime}\in [n+1] \forall j^{\prime} \in [n]\forall j \neq  \widetilde{j} \in [n] \\
  &\quad \quad (\lnot R(i_{1},i_{2},i_{1}^{\prime}, i_{2}^{\prime},j,j^{\prime}) \lor \lnot R(i_{1},i_{2},i_{1}^{\prime}, i_{2}^{\prime},\widetilde{j},j^{\prime}))
 \Bigg)
\end{align*}
Furthermore, we define the propositional formula $FIE_{n}$ as follows:
  \begin{align*}
  FIE_{n}:=
&\lnot \Bigg(
 \bigwedge_{i \in [n+1]} \bigvee_{j \in [n]} s_{ij} \\
 &\land \bigwedge_{i_{1}<i_{2} \in [n+1]} \bigvee_{j \in [n]} ((s_{i_{1}j} \land \lnot s_{i_{2}j}) \lor (\lnot s_{i_{1}j} \land s_{i_{2}j})) \\
 &\land\bigwedge_{i_{1}<i_{2} \in [n+1]} \bigwedge_{i_{1}^{\prime} < i_{2}^{\prime} \in [n+1]} \bigwedge_{j \in [n]} (\lnot s_{i_{1}j} \lor \lnot s_{i_{2}j} \lor \bigvee_{j^{\prime} \in [n]} r^{i_{1},i_{2},i_{1}^{\prime}, i_{2}^{\prime}}_{j,j^{\prime}} )\\
&\land\bigwedge_{i_{1}<i_{2} \in [n+1]} \bigwedge_{i_{1}^{\prime} < i_{2}^{\prime} \in [n+1]} \bigwedge_{j^{\prime} \in [n]} (\lnot s_{i_{1}^{\prime}j} \lor \lnot s_{i_{2}^{\prime}j} \lor \bigvee_{j \in [n]} r^{i_{1},i_{2},i_{1}^{\prime}, i_{2}^{\prime}}_{j,j^{\prime}}) \\
 &\land \bigwedge_{i_{1}<i_{2} \in [n+1]} \bigwedge_{i_{1}^{\prime} < i_{2}^{\prime} \in [n+1]} \bigwedge_{j, j^{\prime} \in [n]} (\lnot r^{i_{1},i_{2},i_{1}^{\prime}, i_{2}^{\prime}}_{j,j^{\prime}} \lor s_{i_{1}j} )\\
 &\land \bigwedge_{i_{1}<i_{2} \in [n+1]} \bigwedge_{i_{1}^{\prime} < i_{2}^{\prime} \in [n+1]} \bigwedge_{j, j^{\prime} \in [n]} (\lnot r^{i_{1},i_{2},i_{1}^{\prime}, i_{2}^{\prime}}_{j,j^{\prime}} \lor s_{i_{2}j})\\
  &\land \bigwedge_{i_{1}<i_{2} \in [n+1]} \bigwedge_{i_{1}^{\prime} < i_{2}^{\prime} \in [n+1]} \bigwedge_{j, j^{\prime} \in [n]}( \lnot r^{i_{1},i_{2},i_{1}^{\prime}, i_{2}^{\prime}}_{j,j^{\prime}} \lor s_{i_{1}^{\prime}j^{\prime}} )\\
  &\land \bigwedge_{i_{1}<i_{2} \in [n+1]} \bigwedge_{i_{1}^{\prime} < i_{2}^{\prime} \in [n+1]} \bigwedge_{j, j^{\prime} \in [n]}( \lnot r^{i_{1},i_{2},i_{1}^{\prime}, i_{2}^{\prime}}_{j,j^{\prime}} \lor s_{i_{2}^{\prime}j^{\prime}}) \\
  &\land \bigwedge_{i_{1}<i_{2} \in [n+1]} \bigwedge_{i_{1}^{\prime} < i_{2}^{\prime} \in [n+1]} \bigwedge_{j \in [n]}\bigwedge_{j^{\prime} \neq  j^{\prime\prime} \in [n]} (\lnot r^{i_{1},i_{2},i_{1}^{\prime}, i_{2}^{\prime}}_{j,j^{\prime}} \lor \lnot r^{i_{1},i_{2},i_{1}^{\prime}, i_{2}^{\prime}}_{j,j^{\prime\prime}} ) \\
  &\land \bigwedge_{i_{1}<i_{2} \in [n+1]} \bigwedge_{i_{1}^{\prime} < i_{2}^{\prime} \in [n+1]} \bigwedge_{j^{\prime} \in [n]}\bigwedge_{j \neq  \widetilde{j} \in [n]} (\lnot r^{i_{1},i_{2},i_{1}^{\prime}, i_{2}^{\prime}}_{j,j^{\prime}} \lor \lnot r^{i_{1},i_{2},i_{1}^{\prime}, i_{2}^{\prime}}_{\widetilde{j},j^{\prime}})
 \Bigg)
  \end{align*}

\end{defn}

\begin{rmk}\label{intuition}
The above formulae are formalizations of Fisher's inequality: there does not exist a family $\{S_{i}\}_{i \in [n+1]}$ satisfying the following:
\begin{itemize}
 \item For each $i$, $\emptyset \neq S_{i} \subseteq [n]$.
 \item For each $i_{1}<i_{2}$, $S_{i_{1}} \neq S_{i_{2}}$.
 \item For each $i_{1}<i_{2}$ and $i^{\prime}_{1}<i^{\prime}_{2}$, $\#(S_{i_{1}} \cap S_{i_{2}})=\#(S_{i^{\prime}_{1}} \cap S_{i^{\prime}_{2}})$.
\end{itemize} 

In our definition of $FIE(n,S,R)$, $S$ intuitively gives a family $\{S_{i}\}_{i \in [n+1]}$, and $R$ gives a family of bijections 
\begin{align*}
\{R^{i_{1},i_{2},i^{\prime}_{1},i^{\prime}_{2}} \colon S_{i_{1}} \cap S_{i_{2}} \rightarrow S_{i^{\prime}_{1}} \cap S_{i^{\prime}_{2}}\}_{i_{1}<i_{2} \& i^{\prime}_{1}<i^{\prime}_{2}}.
\end{align*}
\end{rmk}

\begin{rmk}\label{ondualformalizationofFisher'sinequality}
In \cite{Fisher's inequality}, Fisher gave a special case of the statement, and later, it was generalized to the above form by \cite{Today's Fisher's inequality} and others. 
See introductions of \cite{Today's Fisher's inequality} and \cite{A note on Fisher's inequality} for historical remarks.
Note that there are several versions of the statement of Fisher's inequality.
Primarily, it is often presented in a dual form, switching the roles of sets and elements. 
The form we adopted is following the presentation of \cite{Hard}.
To verify the above version, see the proof of Theorem 4 in \cite{Hard}.
\end{rmk}

Although Fisher's inequality was proposed as a candidate for potentially hard tautology for the Frege system, it is provable in $\VTC^{0}$.
I was unaware of this fact when this paper was submitted, and I would like to thank the anonymous referee for sharing the work of \cite{A note on Fisher's inequality}.
Around the very time \cite{Hard} was published, Woodall gave an elementary proof of Fisher's inequality in \cite{A note on Fisher's inequality}, which was already sufficient to construct short Frege proofs of Fisher's inequality.
The proof can be interpreted as a proof in $\VTC^{0}$ augmented with iterated multiplication of integers (or, equivalently, rationals).
Recently, Je\v{r}abek showed that $\VTC^{0}$ can formalize iterated multiplication of integers and prove its basic properties (\cite{VTC^{0}caniteratemultiplication}).
Since we have nothing to add to their results, we just present the claim for reference:
\begin{theorem}[Essentially by \cite{A note on Fisher's inequality} and \cite{VTC^{0}caniteratemultiplication}]
\[VTC^{0} \vdash FIE(n,S,R).\]
\end{theorem}
Note that \cite{A note on Fisher's inequality} takes the dual approach mentioned in Remark \ref{ondualformalizationofFisher'sinequality}, and also we do not need the ``nontriviality''-condition assumed in the article.

Now, we focus on the lower bounds of the strength of $FIE_{n}$.
It is easy to see that $FIE_{n}$ is a generalization of the pigeonhole principle.

\begin{proposition}
 $V^{0}+FIE_{k} \vdash injPHP^{n+1}_{n}$.
 Hence, for each $p \geq 2$, 
 \[V^{0} + Count^{p}_{k} \not \vdash FIE_{n}.\]
\end{proposition}
\begin{proof}
Argue in $V^{0}$. Suppose $f$ is an injection from $[n+1]$ to $[n]$. 
We set
$S_{i}:= \{f(i)\}$ ($i \in [n+1]$). Then, the $S_{i}$'s are distinct and $S_{i} \cap S_{j}=\emptyset$ for every $i<j$.
Hence, $FIE_{n}$ is violated.

 The latter part follows immediately from Theorem \ref{CountpinjPHP}.
\end{proof}

It is natural to ask the following:
\begin{question}
 Which $p$ satisfies $V^{0}+FIE_{k} \vdash Count^{p}_{n}$?
\end{question}
We conjecture that there is no such $p$. The following theorem gives a criterion for proving this conjecture.

\begin{theorem}
 Let $\KK$ be a field. 
 Suppose $F_{d} + FIE_{k} \vdash_{poly(n)} Count^{p}_{n}$. 
 Then, for large enough $n \not \equiv0 \pmod p$, there exist $m =n^{O(1)}$ and families $(f_{ij})_{i \in [m+1], j \in [m]}$, $(a_{ij})_{i \in [m+1], j \in [m]}$ and $(b_{ii^{\prime}j})_{i<i^{\prime} \in [m+1], j \in [m]}$ of polynomials over $\KK$ such that the following equalities have $NS$-proofs over $\KK$ from $\lnot Count^{p}_{n}$ with degree $O(1)$:

  %\item $m \leq n^{O(1)}$.
  %\item 各$f_{i,j},b_{i,i^{\prime},j}$は各単項式の係数が全て$0$か$1$で次数$O(1)$．
  \begin{align*}
  \sum_{j=1}^{m}f_{i_{1}j}f_{i_{2}j}&=\sum_{j=1}^{m}f_{i_{1}^{\prime}j}f_{i_{2}^{\prime}j} \quad &(i_{1} < i_{2}\in [m+1] \ \& \ i_{1}^{\prime} < i_{2}^{\prime} \in [m+1]),\\
   a_{ij}(1-f_{ij})&=0 \quad &(i \in [m+1]),\\
   \sum_{j=1}^{m}a_{ij}&=1 \quad &(i \in [m+1]),\\
  b_{ii^{\prime}j}f_{ij}f_{i^{\prime}j}&=0 \quad &(i<i^{\prime} \in [m+1], \ \& \ j \in [m]),\\
  b_{ii^{\prime}j}(1-f_{ij})(1-f_{i^{\prime}j})&=0 \quad &(i<i^{\prime} \in [m+1], \ \& \ j \in [m]).\\
  \sum_{j=1}^{m}b_{ii^{\prime}j}&=1 \quad &(i<i^{\prime} \in [m+1]).
  \end{align*}
   
\end{theorem}

\begin{proof}
For readability, we assume $p=3$.
 Assume proofs $(\pi_{n})_{n \in \NN}$ witness
 \begin{align*}
  F_{d} + FIE_{k} \vdash_{poly(n)} Count^{3}_{n}.
 \end{align*} 
 We focus on $n \not \equiv 0 \pmod 3$.
 Let $\Gamma_{n}$ be the set of subformulae of $\pi_{n}$.
 As the proof of Theorem \ref{OnF2}, using the switching lemma for $3$-tree and a padding argument, we may assume that there exists an $O(1)$-evaluation $T^{n}$ of $\Gamma_{n}$, and it suffices to construct $(f_{ij})$, $(a_{ij})$ and $(b_{ii^{\prime}j})$ for $n$.
 We fix a large enough $n \not \equiv 0 \pmod 3$, and suppress scripts $n$ of $T^{n}$, $\rho_{n}$, etc.
 
 %As the proof of Theorem \ref{OnF2}, it suffices to construct $(f_{ij})$, $(a_{ij})$ and $(b_{ii^{\prime}j})$ for these sufficiently large $n^{\epsilon}$.  
%$(Count^{3}_{n})^{\rho}$, which can be identified with $Count^{3}_{n^{\epsilon}}$, satisfies
 Since $T \not \models Count^{3}_{n}$,
 soundness gives that some instance 
 \begin{align*}
 I:= FIE_{m}[\sigma_{ij} / s_{ij}, \varphi^{i_{1},i_{2},i_{1}^{\prime}, i_{2}^{\prime}}_{j,j^{\prime}}/r^{i_{1},i_{2},i_{1}^{\prime}, i_{2}^{\prime}}_{j,j^{\prime}}]
 \end{align*}
 in $\pi_{n}$ satisfies $T \not \models I$. 
 With an additional restriction, we may assume that \[br_{0}(T_{I}) = br(T_{I}).\]\\
 \quad We obtain the following：
 \begin{enumerate}
  \item Let $T_{i}:=T_{\bigvee_{j \in [m]} \sigma_{ij}}$.
  Since 
   \begin{align*}
 T \models \bigvee_{j \in [m]} \sigma_{ij},
   \end{align*}
    each $b \in br(T_{i})$ has at least one $j_{b} \in [m]$ and $b^{\prime} \in br_{1}(T_{\sigma_{ij_{b}}})$ such that $b^{\prime} \subseteq b$.
    We relabel each branch $b \in br(T_{i})$ with $\langle j_{b} \rangle$ and obtain a labeled $injPHP$-tree $\widetilde{T}_{i}$.
    
  \item Let $T_{i_{1}, i_{2}} := T_{\bigvee_{j \in [m]} ((\sigma_{i_{1}j} \land \lnot \sigma_{i_{2}j}) \lor (\lnot \sigma_{i_{1}j} \land \sigma_{i_{2}j}))}$. Since 
 \begin{align*}
 T \models \bigvee_{j \in [m]} ((\sigma_{i_{1}j} \land \lnot \sigma_{i_{2}j}) \lor (\lnot \sigma_{i_{1}j} \land \sigma_{i_{2}j})),
 \end{align*}
  each $b \in br(T_{i_{1},i_{2}})$ has at least one $j_{b}$ satisfying one of the following：
  \begin{enumerate}
   \item For all $b^{\prime} \in br_{0}(T_{\sigma_{i_{1}j_{b}}})\cup br_{1}(T_{\sigma_{i_{2}j_{b}}})$, $b\perp b^{\prime}$．
   \item For all $b^{\prime} \in br_{1}(T_{\sigma_{i_{1}j_{b}}})\cup br_{0}(T_{\sigma_{i_{2}j_{b}}})$, $b \perp b^{\prime}$．
   \end{enumerate}
 We relabel each branch $b \in br(T_{i_{1}, i_{2}})$ with $\langle j_{b} \rangle$ and obtain a labeled $injPHP$-tree $\widetilde{T}_{i_{1},i_{2}}$.
 
 \item Let $ T^{i_{1},i_{2},i_{1}^{\prime}, i_{2}^{\prime}}_{1,j} := T_{ \lnot \sigma_{i_{1}j} \lor \lnot \sigma_{i_{2}j} \lor \bigvee_{j^{\prime} \in [m]} \varphi^{i_{1},i_{2},i_{1}^{\prime}, i_{2}^{\prime}}_{j,j^{\prime}}}$.
 Each $b \in br(T^{i_{1},i_{2},i_{1}^{\prime}, i_{2}^{\prime}}_{1,j} )$ is an extension of some element of $br_{0}(T_{\sigma_{i_{1}j}}), br_{0} (T_{\sigma_{i_{2}j}}), \bigcup_{j^{\prime}}br_{1}(T_{\varphi^{i_{1},i_{2},i_{1}^{\prime}, i_{2}^{\prime}}_{j,j^{\prime}}})$.
 If $b$ is an extension of an element of $br_{1}(T_{\varphi^{i_{1},i_{2},i_{1}^{\prime}, i_{2}^{\prime}}_{j,j^{\prime}}})$, such $j^{\prime}$ is unique.
 \item Let $ T^{i_{1},i_{2},i_{1}^{\prime}, i_{2}^{\prime}}_{2,j^{\prime}} := T_{\lnot \sigma_{i_{1}^{\prime}j} \lor \lnot \sigma_{i_{2}^{\prime}j} \lor \bigvee_{j \in [m]} \varphi^{i_{1},i_{2},i_{1}^{\prime}, i_{2}^{\prime}}_{j,j^{\prime}}}$. 
 Each $b \in br(T^{i_{1},i_{2},i_{1}^{\prime}, i_{2}^{\prime}}_{2,j^{\prime}})$ is an extension of an element of $br_{0}(T_{\sigma_{i_{1}^{\prime}j^{\prime}}}), br_{0} (T_{\sigma_{i_{2}^{\prime}j^{\prime}}}), \bigcup_{j}br_{1}(T_{r^{i_{1},i_{2},i_{1}^{\prime}, i_{2}^{\prime}}_{j,j^{\prime}}})$.
 If $b$ is an extension of an element of $br_{1}(T_{r^{i_{1},i_{2},i_{1}^{\prime}, i_{2}^{\prime}}_{j,j^{\prime}}})$, such $j$ is unique.
  \end{enumerate}
  Now, we set 
  \begin{align*}
  f_{ij}&:= \sum_{\alpha \in br_{1}(T_{\sigma_{ij}})}x_{\alpha},\\
  a_{ij}&:= \sum_{\alpha \in br_{\langle j \rangle}(\widetilde{T}_{i})}x_{\alpha}\\
  b_{i_{1}i_{2}j}&:=\sum_{\alpha \in br_{\langle j \rangle}(\widetilde{T}_{i_{1},i_{2}})}x_{\alpha}.\\
  &( i \in [m+1], j \in [m])
  \end{align*}
  Clearly, $m \leq n^{O(1)}$.\\
  \quad We show that each of the following has a $NS$-proof from $\lnot Count^{3}_{n}$ over $\KK$ with $O(1)$-degree：
  \begin{align}
   &\sum_{j=1}^{m} a_{ij}=1, \label{nonzeroa}\\
   &\sum_{j=1}^{m} b_{i_{1}i_{2}j} = 1, \label{nonzero}\\
   &\sum_{j=1}^{m}f_{i_{1}j}f_{i_{2}j} = \sum_{j=1}^{m} f_{i_{1}^{\prime}j}f_{i_{2}j^{\prime}}, \label{innersame}\\
   &a_{ij}(1-f_{ij})=0, \label{existence} \\
   &b_{i_{1}i_{2}j}f_{i_{1}j}f_{i_{2}j}=0 \label{eitherzero},\\
   &b_{i_{1}i_{2}j}(1-f_{i_{1}j})(1-f_{i_{2}j})=0 \label{eitherone}.
  \end{align}
  \centerline{$(i, i_{1}, i_{2}, i^{\prime}_{1}, i^{\prime}_{2} \in [m+1] \& i_{1}< i_{2} \& i^{\prime}_{1}<i^{\prime}_{2} \& j \in [m] )$}
\begin{enumerate}
   \item [(\ref{nonzeroa}):]  Since the left-hand side is the sum of all branches of the $3$-partition tree $\widetilde{T}_{i}$．
   \item[(\ref{nonzero}):]  Since the left-hand side is the sum of all branches of the $3$-partition tree $\widetilde{T}_{i_{1},i_{2}}$．
   \item[(\ref{innersame}):]  
   We first define $A_{i_{1},i_{2},j}:= T_{\sigma_{i_{1}j}} * T_{\sigma_{i_{2}j}}$ ($i_{1}, i_{2} \in [m+1], j \in [m], i_{1}<i_{2}$).
   Let $B_{i_{1},i_{2},j}$ be the set of all branches $b \in A_{i_{1},i_{2},j}$ having the form
   \begin{align*}
   b=cd^{c} \quad (c \in br_{1}(T_{\sigma_{i_{1}j}}), d \in br_{1}(T_{\sigma_{i_{2}j}})).
   \end{align*} 
   It is easy to construct a $NS$-proof of
   \begin{align}
    f_{i_{1}j}f_{i_{2}j} = \sum_{b \in B_{i_{1},i_{2},j}} x_{b} \label{andeq}
   \end{align}
   from $\lnot Count^{3}_{n}$ over $\KK$ with degree $O(1)$.\\
   \quad Now, fix $i_{1}, i_{2}, i^{\prime}_{1}, i^{\prime}_{2} \in [m+1]$ such that $i_{1}< i_{2}$ and $i^{\prime}_{1}<i^{\prime}_{2}$. 
   For each $j \in [m]$, consider the trees 
   \begin{align*}
    R_{j} &:= A_{i_{1},i_{2},j}*\sum_{b \in B_{i_{1},i_{2},j}}(T^{i_{1},i_{2},i_{1}^{\prime}, i_{2}^{\prime}}_{1,j})^{b}\\
    R^{\prime}_{j}&:=A_{i^{\prime}_{1},i^{\prime}_{2},j}*\sum_{b \in B_{i^{\prime}_{1},i^{\prime}_{2},j}}(T^{i_{1},i_{2},i_{1}^{\prime}, i_{2}^{\prime}}_{2,j})^{b}.
   \end{align*}
   For each $r=bd^{b}$ ($b \in B_{i_{1},i_{2},j}$, $d \in br(T^{i_{1},i_{2},i_{1}^{\prime}, i_{2}^{\prime}}_{1,j})$), since $d || b$, there exists a unique $j^{\prime}_{r}$ such that $d$ is an extension of some $c \in br_{1}(T_{\varphi^{i_{1},i_{2},i_{1}^{\prime}, i_{2}^{\prime}}_{j,j^{\prime}_{r}}})$. 
   Let $B_{j} \subseteq br(R_{j})$ be the set of all branches having the above form.\\
   \quad Similarly, for each $r^{\prime}=bd^{b}$ ($b \in B_{i^{\prime}_{1},i^{\prime}_{2},j}$, $d \in br(T^{i_{1},i_{2},i_{1}^{\prime}, i_{2}^{\prime}}_{2,j})$), since $d || b$, there exists a unique $\widehat{j}_{r^{\prime}}$ such that $d$ is an extension of some $c \in br_{1}(T_{\varphi^{i_{1},i_{2},i_{1}^{\prime}, i_{2}^{\prime}}_{\widehat{j}_{r^{\prime}},j}})$. 
   Let $B^{\prime}_{j} \subseteq br(R^{\prime}_{j})$ be the set of all branches having the above form.\\
   \quad Now, we define
   \begin{align*}
   T_{j,j^{\prime}}:= 
   (((T_{\lnot r^{i_{1},i_{2},i_{1}^{\prime}, i_{2}^{\prime}}_{j,j^{\prime}} \lor s_{i_{1}j}} * 
   T_{\lnot r^{i_{1},i_{2},i_{1}^{\prime}, i_{2}^{\prime}}_{j,j^{\prime}} \lor s_{i_{2}j} })*
   T_{\lnot r^{i_{1},i_{2},i_{1}^{\prime}, i_{2}^{\prime}}_{j,j^{\prime}} \lor s_{i^{\prime}_{1}j^{\prime}} })*
   T_{\lnot r^{i_{1},i_{2},i_{1}^{\prime}, i_{2}^{\prime}}_{j,j^{\prime}} \lor s_{i^{\prime}_{2}j^{\prime}} }
   ).
   \end{align*}
   for each $j\neq j^{\prime} \in [m]$. Using these trees, we define
   \begin{align*}
   S_{j} &:= R_{j} * \sum_{r \in B_{j}} (T_{j,j^{\prime}_{r}} * \sum_{t \in br(T_{j,j^{\prime}_{r}})} (R^{\prime}_{j^{\prime}_{r}})^{t})^{r},\\
   S^{\prime}_{j} &:= R^{\prime}_{j} * \sum_{r^{\prime} \in B^{\prime}_{j}} (T_{\widehat{j}_{r^{\prime}},j}*\sum_{t \in br(T_{\widehat{j}_{r^{\prime}},j})} (R_{\widehat{j}_{r^{\prime}}})^{t} )^{r^{\prime}}.
   \end{align*} 
   Label each branch $b \in br(S_{j})$ as follows:
   \begin{itemize}
   \item If $b$ extends some $r \in B_{j}$, then label $b$ with $\langle j,j^{\prime}_{r} \rangle$.
   \item Otherwise, label $b$ with the symbol $\perp$.
   \end{itemize} 
   Similarly, we label each branch $b^{\prime} \in br(S^{\prime}_{j})$ as follows:
   \begin{itemize}
    \item If $b$ extends some $r^{\prime} \in B^{\prime}_{j}$, then label $b$ with $\langle \widehat{j}_{r^{\prime}}, j\rangle$.
    \item Otherwise, label $b$ with the symbol $\perp$.
   \end{itemize}
   It is easy to see that for each $j,j^{\prime}$, $br_{\langle j,j^{\prime}\rangle} (S_{j})= br_{\langle j,j^{\prime} \rangle}(S^{\prime}_{j^{\prime}})$.
   Hence,
   \begin{align*}
    \sum_{j, j^{\prime} \in [m]} \sum_{\alpha \in br_{\langle j,j^{\prime} \rangle}(S_{j})} x_{\alpha}
    =
    \sum_{j, j^{\prime} \in [m]} \sum_{\beta \in br_{\langle j,j^{\prime} \rangle}(S^{\prime}_{j^{\prime}})} x_{\beta}.
   \end{align*}
   Furthermore, it is easy to see that the following have $NS$-proofs from $\lnot Count^{3}_{n}$ over $\KK$ with $O(1)$-degree:
   \begin{align*}
    &\sum_{ j^{\prime} \in [m]} \sum_{\alpha \in br_{\langle j,j^{\prime} \rangle}(S_{j})} x_{\alpha}
    = \sum_{b \in B_{i_{1},i_{2},j}} x_{b} \quad (j \in [m]),\\
    &\sum_{j \in [m]} \sum_{\beta \in br_{\langle j,j^{\prime} \rangle}(S^{\prime}_{j^{\prime}})} x_{\beta}
    = \sum_{b \in B_{i^{\prime}_{1},i^{\prime}_{2},j^{\prime}}} x_{b} \quad (j^{\prime} \in [m]).
   \end{align*}
   Hence, combined with (\ref{andeq}), they give a $NS$-proof of $\sum_{j} f_{i_{1}j}f_{i_{2}j} = \sum_{j^{\prime}}f_{i^{\prime}_{1}}f_{i^{\prime}_{2}}$ satisfying the required conditions.
   
   \item[(\ref{existence}):] It follows similarly as (\ref{eitherone}) below.
   
   \item[(\ref{eitherzero}):] $b_{i_{1}i_{2}j}f_{i_{1}j}f_{i_{2}j}=0$ follows easily from $\lnot Count^{3}_{n}$ since each $\alpha \in br_{\langle j \rangle} (\widetilde{T}_{i_{1},i_{2}})$ satisfies $\alpha \perp b$ for all $b \in B_{i_{1},i_{2},j}$.
   \item[(\ref{eitherone}):] Note that we have $NS$-proofs of the following:
   \begin{align*}
   &f_{i_{1}j}+\sum_{\beta \in br_{0}(T_{\sigma_{i_{1}j}})} x_{\beta}=1,\\
   &f_{i_{2}j}+\sum_{\beta \in br_{0}(T_{\sigma_{i_{2}j}})} x_{\beta}=1.\\
   &(j \in [m])
   \end{align*}
   Hence, $b_{i_{1}i_{2}j}(1-f_{i_{1}j})(1-f_{i_{2}j})=0$ follows easily from $\lnot Count^{3}_{n}$ by a similar reason as the previous item.
 \end{enumerate}
\end{proof}

\section*{Acknowledgements}
\quad The author deeply appreciates the anonymous referee's pointing out several mistakes in the initial version of this paper, numerous constructive comments and suggestions, and drawing my attention to \cite{A note on Fisher's inequality}.
The author is also grateful to Toshiyasu Arai for the helpful discussions and his patient support and to Satoru Kuroda for guiding me through the series of work on proof complexity of linear algebra.
In addition, the author thanks to Jan Kraj\'{i}\v{c}ek, Mykyta Narusevych, Ond\v{r}ej Je\v{z}il, Emil Je\v{r}abek, Pavel Pudl\'{a}k, Erfan Khaniki, and Moritz M\"{u}ller for their valuable comments on the technical barrier of resolving $ontoPHP$ v.s. $injPHP$ over $V^{0}$, which are closely related to Remark \ref{switchingforinjPHPisdifficult}, and pointing out several typos in the initial version of this paper.
This research was done during 2020-2022 and supported by:
\begin{itemize}
\item FoPM, WINGS Program, the University of Tokyo, and
\item the Research Institute for Mathematical Sciences,
an International Joint Usage/Research Center located in Kyoto University.
\end{itemize}
The author got the above feedback in 2023.

\end{document}